\newcommand{\color}[2][{}]{}        % figure-x.pstex_t file contains this
\renewcommand\mathcal\mathscr  
\theoremstyle{plain}            % body italics
\newtheorem{theorem}{Theorem}[section]
\newtheorem{lemma}[theorem]{Lemma}
\newtheorem{corollary}[theorem]{Corollary}
\theoremstyle{definition}       % body roman
\newtheorem{definition}[theorem]{Definition}
\theoremstyle{remark}           
\newtheorem{remark}[theorem]{Remark}
\newcommand{\Sec}[1]{Section~\ref{sec:#1}}
\newcommand{\Eq}[1]{Eq.~\eqref{eq:#1}}
\newcommand{\Thm}[1]{Theorem~\ref{thm:#1}}
\newcommand{\Lem}[1]{Lemma~\ref{lem:#1}}
\newcommand{\LemS}[2]{Lemmas~\ref{lem:#1}--\ref{lem:#2}}
\newcommand{\Cor}[1]{Corollary~\ref{cor:#1}}
\newcommand{\Rem}[1]{Remark~\ref{rem:#1}}
\newcommand{\Remenum}[2]{Remark~\ref{rem:#1}~(\ref{#2})}
\newcommand{\Def}[1]{Definition~\ref{def:#1}}
\newcommand{\Defenum}[2]{Definition~\ref{def:#1}~(\ref{#2})}
\numberwithin{equation}{section}
\DeclareMathOperator{\id}     {id}  % identity map
\DeclareMathOperator{\dom}    {dom}
\DeclareMathOperator{\ran}    {ran}
\newcommand{\spec}[2][{}]   {\sigma_{\mathrm{#1}}(#2)}
\def\XXint#1#2#3{{\setbox0=\hbox{$#1{#2#3}{\int}$}
     \vcenter{\hbox{$#2#3$}}\kern-.5\wd0}}
\newlength{\maxbreite}%
\newlength{\maxhoehe}%
\newlength{\maxtiefe}%
\newcommand{\stelldrueber}[3][0pt]{%  Vorbereitung f"ur Kreis "uber Symbol
  \settowidth{\maxbreite}{#3}%
  \settoheight{\maxhoehe}{#3}%
  \settodepth{\maxtiefe}{#2}%
  \addtolength{\maxhoehe}{\maxtiefe}%
  {\makebox[\maxbreite]{\raisebox{\maxhoehe}{\hspace{#1}#2}}%
  \makebox[0pt][r]{#3}}%
}
\newcommand{\overcirc}[1]       % Kreis "uber Symbol
{\stelldrueber[.45ex]{$\scriptscriptstyle\circ$}{${#1}$}}
\renewcommand{\phi}{\varphi}   % shortcut
\newcommand{\im}{\mathrm i} % complex unit
\DeclareMathOperator{\dd}    {d\!} 
\newcommand{\wt}{\widetilde}           % shortcut
\newcommand{\HS}{\mathcal H}           % symbol for Hilbert space
\newcommand{\Bsymb} {\mathcal B}       % symbol for bounded linear operators
\newcommand{\Sobsymb} {\mathsf H}      % symbol for Sobolev space
\newcommand{\Lsymb}    {\mathsf L}     % symbol for int L-spaces
\newcommand{\Lin}[1]{\mathcal \Bsymb({#1})}% symbol for bdd linear operators
\newcommand{\Lsqr}[2][{}]{\Lsymb_2^{#1}({#2})} % L_2(#1)-spaces 
\newcommand{\Sob}[2][1]{\Sobsymb^{#1}({#2})} % Sobolev spaces
\newcommand{\Sobn}[2][1]{\ring \Sobsymb^{#1}({#2})}
\newcommand{\norm}[2][{}]{\|{#2}\|_{{#1}}}    % norm
\newcommand{\normsqr}[2][{}]{\|{#2}\|^2_{#1}} % norm squared
\newcommand{\iprod}[3][{}]{\langle{#2},{#3}\rangle_{#1}}  % inner product
\newcommand{\set}[2]{\{ \, #1 \, | \, #2 \, \} } % set { #1 | #2 }
\newcommand{\bigset}[2]{\bigl\{ \, #1 \, \bigl|\bigr. \, #2 \, \bigr\} }
\newcommand{\map}[3]{ #1 \colon #2 \longrightarrow #3 } % maps
\newcommand{\partmap}[3]{ #1 \colon #2 \dashrightarrow #3 } % partial maps
\newcommand{\bd}  {\partial}                % symbol for boundary of a set
\newcommand{\restr}[1]{{\restriction}_{#1}} % symbol for map restriction
\newcommand{\conj}[1]{\overline {{#1}}}       % symbol for complex conjugation
\newcommand{\orth}{\bot}                    % symbol for orthogonality
\newcommand{\normder}{\partial_\mathrm{n}}  % symbol for normal derivative
\newcommand{\Neu}{{\mathrm N}}              % symbol for Neumann bd cond
\newcommand{\Dir}{{\mathrm D}}              % symbol for Dirichlet bd cond
\newcommand{\HSaux}{{\mathcal G}}
\newcommand{\ded}{\updelta}
\newcommand{\dbar}[1]{\leavevmode\raise0.6ex\hbox{--}\kern-0.7em #1}
\newcommand{\HSn}{\ring \HS}
\newcommand{\dplus}{\mathop{\dot+}}
\newcommand{\lapl} [1][{}]{\Delta_{#1}} % symbol for Laplacian
\newcommand{\laplD}[1][{}]{\Delta^\Dir_{#1}} % symbol for D Laplacian
\newcommand{\laplN}[1][{}]{\Delta^\Neu_{#1}} % symbol for N Laplacian
\newcommand{\laplX} [2][{}]{\Delta_{#1}^{#2}} % symbol for Laplacian
\newcommand{\mc}{\mathcal}
\newcommand{\Graph} X  % spacing
\newcommand{\oplussplit}{\stackrel{\curlyveeuparrow}{\oplus}}
\newcommand{\de}   {\mathord {\mathrm d}}         % operator for ext. der.
\DeclareMathOperator{\graph}{graph}
\newcommand{\embmap}[3]{ #1 \colon #2 \hookrightarrow #3 } % maps
\begin{document}
\title[First order operators and boundary triples]{First order
  operators and boundary triples}

\author{Olaf Post}      % remove \today in final version
\address{Institut f\"ur Mathematik,
         Humboldt-Universit\"at zu Berlin,
         Rudower Chaussee~25,
         12489 Berlin,
         Germany}
\email{post@math.hu-berlin.de}
\date{\today}%, \currenttime h}
%\date{\today}

%------------------------------------------------------------
% Subject classifications
%------------------------------------------------------------

%\subjclass{35P20, 58G18, 47F05}

%------------------------------------------------------------
% Abstract.
%------------------------------------------------------------

\begin{abstract}
  The aim of the present paper is to introduce a first order approach
  to the abstract concept of boundary triples for Laplace operators.
  Our main application is the Laplace operator on a manifold with
  boundary; a case in which the ordinary concept of boundary triples
  does not apply directly.  In our first order approach, we show that
  we can use the usual boundary operators also in the abstract Green's
  formula.  Another motivation for the first order approach is to give
  an intrinsic definition of the Dirichlet-to-Neumann map and
  intrinsic norms on the corresponding boundary spaces.  We also show
  how the first order boundary triples can be used to define a usual
  boundary triple leading to a Dirac operator.
  \hfill{\emph{\footnotesize In memoriam Vladimir A.~Geyler
      (1943-2007)}}
\end{abstract}

\maketitle

%----------------------------------------------------------------------
%
\section{Introduction}
\label{sec:intro}
%
%----------------------------------------------------------------------

The concept of boundary triples, originally introduced
in~\cite{vishik:63}, has successfully be applied to the theory of
self-adjoint extensions of symmetric operators, for example on quantum
graphs, singular perturbations or point interactions on manifolds (see
e.g.~\cite{bgp:pre06}). For a general treatment of boundary triples we
refer to~\cite{bgp:pre06,dhms:06} and the references therein.

Our main purpose here is not to characterise all self-adjoint
extensions of a given symmetric operator, but to show that the concept
of boundary triples can also be used in the PDE case, namely to
Laplacians on a manifold with boundary. The standard theory of
boundary triples does not directly apply in this case, since Green's
formula
\begin{equation*}
  \int_X {\lapl \conj f} g \dd x 
  - \int_X \conj f \lapl g \dd x 
  = \int_{\bd X} (\normder \conj f  g - \conj f \normder g)\restr{\bd X}
\end{equation*}
does not extend to $f,g$ in the maximal operator domain
\begin{equation*}
  \dom \laplX {\max} =
  \set{f \in \Lsqr X}{\laplX{\max} f \in \Lsqr X \text{ (distributional
    sense)}}
\end{equation*}
(cf.~\Rem{lapl.max.mfd} for details).  A solution to overcome this
problem is either to modify the boundary operators (restriction of the
function and the normal derivative onto $\bd X$) as e.g.\
in~\cite{bmnw:pre07,posilicano:pre07}, or to introduce the concept of
\emph{quasi} boundary triples as in~\cite{behrndt-langer:07} (cf.~also
the references therein for further treatments of boundary triples in
the PDE case).

Here, we use a different approach: we start with \emph{first order}
operators, namely the exterior derivative $\de$ taking functions
($0$-forms) to $1$-forms and its adjoint, the \emph{divergence
  operator} $\ded$, mapping $1$-forms into functions, since the first
order operator domains are simpler.  The Laplacian (on functions) is
then defined as $\lapl[0] := \ded \de$.  Certainly, in our approach we
do not cover all self-adjoint extensions of the minimal Laplacian.

The abstract approach also allows to define the Dirichlet-to-Neumann
map in an intrinsic manner, and also the norm of
$\HSaux^{1/2}=\Sob[1/2]{\bd X}$ is defined intrinsicly. This might be
a great advantage when dealing with parameter-depending manifolds, as
it is the case for graph-like manifolds (see
e.g.~\cite{exner-post:pre07b,post:06}). We will treat this question in
a forthcoming publication.  Our approach is related to the recent
works of Arlinskii~\cite{arlinskii:00},
Posilicano~\cite{posilicano:pre07} and Brown et~al.~\cite{bmnw:pre07},
where also a PDE example is treated in the context of boundary
triples.

To precise our idea of the first order approach we sketch the
construction here. The given data are\footnote{Here and in the sequel,
  $\partmap A {\HS_0}{\HS_1}$ denotes a partial map, i.e., a map (a
  linear operator) which is defined only on a subset $\dom A \subset
  \HS_0$.}
\begin{equation*}
  \HS_0, \quad \HS_1, \qquad
  \partmap \de {\HS_0}{\HS_1}, \quad \HS_0^1:=\dom \de,
\end{equation*}
where $\HS_p$ are Hilbert spaces (``$p$-forms''), and $\HS_0^1$
carries the graph norm. Guided by our main application (a manifold with
boundary), we call $\de$ an \emph{exterior derivative}.

A boundary map (of order $0$) is a bounded operator
\begin{equation*}
  \map {\gamma_0}{\HS_0^1} \HSaux, \qquad
  \HSaux^{1/2}:= \ran \gamma_0 
\end{equation*}
with dense range $\HSaux^{1/2} \subset \HSaux$, where
$\HSaux$ is another Hilbert space (usually over the boundary).

For these data, we define $\de_0 := \de$ restricted to $\HSn_0^1 :=
\ker \gamma_0$ and the \emph{divergence} operator $\ded := \de_0^*$
with domain $\HS_1^1 := \dom \ded$. Furthermore, we can define a
natural norm on $\HSaux^{1/2}$ using $\gamma_0$.

In addition, we have a boundary operator of order $1$, namely,
$\map{\gamma_1}{\HS_1^1}\HSaux$, with the same range $\ran \gamma_1 =
\ran \gamma_0 = \HSaux^{1/2}$. Moreover, an abstract Green's formula
is valid, i.e.,
\begin{equation*}
    \iprod {\de f_0} {g_1} - \iprod {f_0} {\ded g_1}
    = \iprod[\HSaux^{1/2}]{\gamma_0 f_0}{\gamma_1 g_1}.
%   = ({\gamma_0 f_0},{\wt \gamma_1 g_1})_\HSaux
\end{equation*}
Finally, $h_p = \beta_p^z \phi$ is the solution of the Dirichlet and
Neumann problem 
\begin{equation*}
  \lapl[p] h_p = z h_p,  \qquad \gamma_p h_p = \phi,
\end{equation*}
respectively; we call $\beta_p^z$ also a \emph{Krein $\Gamma$-field of
  order $p$}.

The \emph{Krein Q-function} is defined as
\begin{equation*}
  Q_0^z \phi := \gamma_1 \de \beta_0^z;
\end{equation*}
a bounded operator (on the boundary space $\HSaux^{1/2}$), closely
related to the usual Dirichlet-to-Neumann map $\Lambda(z)$ on a
manifold with boundary defined in~\Eq{dn}.

The main idea here is to consider the Laplacian $\lapl[0] f_0 := \ded
\de f_0$ on the space
\begin{equation*}
  \HS_0^2 := \dom \ded \de :=
  \bigset{f_0 \in \dom \de}{\de f_0 \in \dom\ded}
\end{equation*}
instead of the maximal domain $\dom \laplX[0] {\max} =\set{f_0 \in
  \HS_0} {\lapl[0] f_0 \in \HS_0}$. Although $\lapl[0]$ is not closed on
$\HS_0^2$, we can develop a suitable theory of boundary spaces. In
particular, for a bounded and self-adjoint operator $B$ in
$\HSaux^{1/2}$ we can show that the Laplacian $\lapl[0]$ restricted to
\begin{equation*}
  \dom \laplX[0]B 
    := \set {f_0 \in \HS_0^2}
            {\gamma_1 \de f_0 = B \gamma_0 f_0}
\end{equation*}
(Robin-type boundary conditions) is self-adjoint under a suitable
condition on the domain of the adjoint (fulfilled in our example of
the Laplacian on a manifold with boundary). Our main result is Krein's
resolvent formula for the resolvents of $\laplX[0]B$ and the Dirichlet
Laplacian $\laplD[0]$; and a spectral relation between the operators
$\laplX[0]B$ and $Q_0^z - B$, namely
\begin{equation*}
  \spec {\laplX[0] B} \setminus \spec {\laplD[0]}
  = \set{ z \notin \spec {\laplD[0]}} 
      { 0 \in \spec{Q_0^z - B}}.
\end{equation*}
(see \Thm{krein}). The main advantage of our approach is that it can
almost immediately be applied to the case of the Laplacian on a
manifold with boundary, using the standard boundary operator
(restriction of a function to the boundary and restriction of the
normal component of a $1$-form to the boundary).

The paper is organised as follows: In the next section, we develop the
concept of first order boundary triples. In \Sec{bd.triple} we show
how this concept fits into the usual theory of boundary triples.
\Sec{example} contains our motivating example, namely, the Laplacian
on a manifold with boundary.

%----------------------------------------------------------------------
\subsection*{Acknowledgements}
%----------------------------------------------------------------------
The author would like to dedicate this contribution to his dear
colleague at the Humboldt University, Vladimir A.~Geyler, who passed
away very suddenly.  The author acknowledges the financial support of
the Collaborative Research Center SFB 647 ``Space -- Time -- Matter.
Analytic and Geometric Structures'' in which Vladimir Geyler also was
a member. Finally, the author would like to thank Y.~Arlinskii for
drawing his attention to the article~\cite{arlinskii:00}, where a very
similar approach was used.

% ----------------------------------------------------------------------
%
\section{First order approach}
\label{sec:1st.order}
%
%----------------------------------------------------------------------
In this section, we develop the concept of boundary triples for
operators acting in different Hilbert spaces; guided by our main
example of the exterior derivative on a manifold with boundary.
\begin{definition}
  \label{def:ext.der}
  Let $\HS=\HS_0 \oplus \HS_1$ and $\HSaux$ be Hilbert spaces.
  \begin{enumerate}
  \item Elements of $\HS_p$ are referred to as \emph{$p$-forms}.
  \item A partial map $\partmap {\de}{\HS_0}{\HS_1}$ is called an
    \emph{exterior derivative} if $\de$ is a closed map with dense
    domain $\HS_0^1 := \dom \de \subset \HS_0$. We endow $\HS_0^1$
    with the natural norm defined by
    \begin{equation*}
      \normsqr[\HS_0^1] {f_0} := \normsqr {f_0} + \normsqr{\de f_0}.
    \end{equation*}
  \item
    \label{bd}
    We call $\map {\gamma_0}{\HS_0^1}\HSaux$ a \emph{boundary map (of
      degree $0$)} associated to $\de$ iff $\gamma_0$ is bounded with
    dense image, and if $\HSn_0^1 := \ker \gamma_0 \subset
    \HS_0^1=\dom \de$ is dense in $\HS_0$. The auxiliary Hilbert space
    is also referred to as a \emph{boundary space}. We say that
    $\gamma_0$ is \emph{proper}, if $\gamma_0$ is not surjective,
    i.e., if $\HSaux^{1/2}:= \gamma_0(\HS_0^1) \subsetneq \HSaux$.
  \item The data $(\HS, \HSaux, \gamma_0)$ define a \emph{first order
      boundary triple} for the exterior derivative $\partmap \de
    {\HS_0}{\HS_1}$ if $\gamma_0$ a boundary map associated to $\de$.
  \end{enumerate}
\end{definition}
\begin{definition}
  \label{def:div}
  We set $\de_0 := \de \restr {\HSn_0^1}$, and call $\partmap
  {\ded:=\de_0^*} {\HS_1}{\HS_0}$ the \emph{divergence operator} with
  domain $\HS_1^1 := \dom \ded$ and $\HSn_1^1 := \dom \de^*$ (clearly,
  $\HSn_1^1 \subset \HS_1^1$, and $\HSn_1^1$ is dense in $\HS_1$ since
  $\de$ is densely defined).  We endow $\HS_1^1$ with the natural norm
  \begin{equation*}
    \normsqr[\HS_1^1] {f_1} 
    := \normsqr {f_1} + \normsqr{\ded f_1}.
  \end{equation*}
\end{definition}
\begin{definition}
  \label{def:lapl}
  \indent
  \begin{enumerate}
  \item We call $\Delta_0 := \ded \de$ the \emph{Laplacian of degree
      $0$} with domain
    \begin{equation*}
      \HS_0^2 := \dom \ded \de :=
      \bigset{f_0 \in \dom \de}{\de f_0 \in \dom\ded}
    \end{equation*}
    Similarly, $\Delta_1 := \de \ded$ is called the \emph{(maximal)
      Laplacian of degree $1$} with domain
    \begin{equation*}
      \HS_1^2 := \dom \de \ded :=
      \bigset{f_1 \in \dom \ded}{\ded f_1 \in \dom\de}.
    \end{equation*}
    We endow $\HS_p^2$ with the norms
    \begin{align*}
      \normsqr[\HS_0^2] {f_0} &:= \normsqr {f_0} + \normsqr{\de f_0} +
      \normsqr{\ded \de
        f_0},\\
      \normsqr[\HS_0^2] {f_1} &:= \normsqr {f_1} + \normsqr{\ded f_1}
      + \normsqr{\de \ded f_1}.
    \end{align*}
    We denote the eigenspaces by $\mc N_p^z := \ker (\Delta_p - z)
    \subset \HS_p^2$. For $z=-1$, we set $\mc N_p := \mc N_p^{-1}$.
  \item We call
    \begin{align*}
      \laplD[0] &:=\de_0^* \de_0, &
      \laplN[0] &:=\de^* \de,\\
      \laplD[1] &:=\de_0 \de_0^*, & \laplN[1] &:=\de \de^*
    \end{align*}
    with the appropriate domains the \emph{Dirichlet Laplacian of
      degree $p=0,1$} and the \emph{Neumann Laplacian of degree
      $p=0,1$}, respectively. Clearly, all these operators are
    self-adjoint and non-negative. We denote the corresponding
    resolvents by $R_p^\Dir:=(\laplD[p]+1)^{-1}$ and
    $R_p^\Neu:=(\laplN[p]+1)^{-1}$.
  \end{enumerate}
\end{definition}
%\cite{post:pre07a}
The following diagram tries to illustrate the two scales of Hilbert
spaces associated to $\de$, $\de^*$ and $\de_0$, $\de_0^*=\ded$ (dotted
arrows). % The upper ``east'' arrows denote the resolvent
% $(\laplN[0]+1)^{-1/2}=(\de^* \de + 1)^{-1/2}$, the lower ``east''
% arrows $(\laplN[1]+1)^{-1/2}=(\de \de^* + 1)^{-1/2}$. The
% ``northeast'' arrrows denote $(\laplD[0]+1)^{-1/2}=(\de_0^* \de_0 +
% 1)^{-1/2}$ (upper) and $(\laplD[1]+1)^{-1/2}=(\de_0 \de_0^* +
% 1)^{-1/2}$ (lower).
Note that only at order $1$, $0$ and $-1$, we have relations between
the two scales:
\begin{equation}
  \label{eq:sub.cov}
  \begin{diagram}
%           &     &            &      &       &      & & \ruTo \\
           &     &            &      &       &      & \HSn_0^1 \\
           &     &            &      &       & \ruDotsto^{(R_0^\Dir)^{1/2}} 
                        \ldDotsto(2,6)^{\hspace{22ex}\de_0}& \dInto\\
     \dots &\rTo & \HS_0^{-1} & 
       \rTo^{(R_0^\Neu)^{1/2}} & \HS_0 & \rTo & \HS_0^1 &  \rTo & \dots\\
           &     & \uOnto     & \ruDotsto \ldTo(2,4)^{\de \qquad} 
                                \luTo(2,4)^{\de^*\qquad \qquad \qquad \qquad} 
                                \ldDotsto(2,6)_{\hspace{22ex}\de_0} & 
                       & \ldTo(2,4)_{\hspace{15ex}\de}  
                         \luTo(2,4)_{\de^*\hspace{15ex}}
                          \luDotsto(2,2)^{\ded \qquad\qquad}& & \ruDotsto\\
           &     & \HSn_0^{-1}&      &       &      & \HS_1^1 \\
           &\ruDotsto& & \luDotsto_{\hspace{8ex}\ded}&  & \ruDotsto & \uInto\\
     \dots &\rTo & \HSn_1^{-1}& \rTo & \HS_1 & \rTo_{(R_1^\Neu)^{1/2}}
                                             & \HSn_1^1&\rTo & \dots\\
           &     & \dOnto     & \ruDotsto_{(R_1^\Dir)^{1/2}} 
                                              &       &       &       & \\
           &     & \HS_1^{-1} &      &       &       &       \\
%           &\ruTo&            &      &       &       &       \\
  \end{diagram}
\end{equation}

\begin{remark}
  \label{rem:lapl}
  \indent
  \begin{enumerate}
  \item The spaces $\HS_p^2$ are complete, i.e., Hilbert spaces with
    their natural norms.
  \item
    \label{max}
    Note that $\Delta_p$ is a bounded operator on $\HS_p^2$. However,
    $\Delta_p$ with $\dom \Delta_p = \HS_p^2$ is \emph{not} closed.
    Although we call $\Delta_p$ the \emph{maximal} Laplacian, it is
    not the maximal operator $\Delta_p^{\max}$ in the usual sens
    (which is the operator closure of $\Delta_p$ with domain
    \begin{equation}
      \label{eq:lapl.max}
      \dom \laplX[p]{\max} :=
      \bigset{f_p \in \HS_p} {\lapl[p] f_p \in \HS_p}
    \end{equation}
    in the distributional sense). In general, $\HS_p^2 \subsetneq
    \dom \Delta_p^{\max}$. This observation is one of the motivations
    for our first order approach (see \Sec{example}).
  \end{enumerate}
\end{remark}

\begin{lemma}
  \label{lem:osum}
  We have $\HS_p^1 = \HSn_p^1 \oplus \mc N_p$ (orthogonal sum).
\end{lemma}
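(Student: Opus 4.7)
The plan is to apply the standard orthogonal decomposition in the Hilbert space $\HS_p^1$ (which is complete by \Rem{lapl}): I will first check that $\HSn_p^1$ is a closed subspace of $\HS_p^1$, so that $\HS_p^1 = \HSn_p^1 \oplus (\HSn_p^1)^\perp$ holds automatically, and then identify the orthogonal complement with $\mc N_p$ by a direct computation involving the definition of the adjoint.

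For closedness I treat the two cases separately. For $p = 0$, the map $\map{\gamma_0}{\HS_0^1}{\HSaux}$ is bounded by \Defenum{ext.der}{bd}, so $\HSn_0^1 = \ker \gamma_0$ is closed in $\HS_0^1$. For $p = 1$, since $\de_0 \subset \de$ I have $\de^* \subset \de_0^* = \ded$; consequently the graph norm of $\de^*$ on $\dom \de^* = \HSn_1^1$ coincides with the restriction of the $\HS_1^1$-norm, and this norm is complete because $\de^*$ is closed.

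The core step is the identification $(\HSn_p^1)^\perp = \mc N_p$. Fix $h_p \in \HS_p^1$; by definition of the graph inner product, $h_p \perp \HSn_p^1$ means
\begin{equation*}
  \iprod{f_p^0}{h_p} + \iprod{D f_p^0}{D' h_p} = 0
  \qquad \text{for all } f_p^0 \in \HSn_p^1,
\end{equation*}
where $(D,D') = (\de_0,\de)$ for $p=0$ and $(D,D') = (\de^*,\ded)$ for $p=1$. Rewriting this as $\iprod{D f_p^0}{D'h_p} = -\iprod{f_p^0}{h_p}$ for all $f_p^0 \in \dom D$ is the textbook characterization of membership in $\dom D^*$: it is equivalent to $D' h_p \in \dom D^*$ together with $D^* D' h_p = -h_p$. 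For $p=0$, $D^* = \de_0^* = \ded$, so the condition becomes $h_0 \in \HS_0^2$ with $\ded \de h_0 = -h_0$, that is $h_0 \in \mc N_0$. For $p=1$, $D^* = (\de^*)^* = \de$ (using that $\de$ is densely defined and closed by \Defenum{ext.der}{bd}), so the condition becomes $h_1 \in \HS_1^2$ with $\de \ded h_1 = -h_1$, that is $h_1 \in \mc N_1$. The reverse inclusion $\mc N_p \subset (\HSn_p^1)^\perp$ is obtained by reading the same computation backwards, which in particular verifies that the decomposition is genuinely orthogonal in the graph inner product.

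Because the argument proceeds entirely by unwinding the definitions of adjoints and graph norms, I do not expect a genuine obstacle; the only point requiring attention is the bookkeeping for $p=1$, where one must remember that $\de^{**} = \de$ and that $\de^*$ is the restriction of $\ded$ to $\dom \de^*$, so that the $\HS_1^1$-graph inner product restricts correctly to $\HSn_1^1$.
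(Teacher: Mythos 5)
Your proof is correct and follows essentially the same route as the paper: the orthogonal complement of $\HSn_p^1$ in the graph inner product is identified with $\mc N_p$ by unwinding the definition of the adjoint ($\ded=\de_0^*$ for $p=0$, $\de=(\de^*)^*$ for $p=1$). You additionally verify that $\HSn_p^1$ is closed in $\HS_p^1$ — a step the paper leaves implicit but which is needed for the decomposition — and you carry out the $p=1$ case that the paper dismisses as ``similar''.
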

\begin{proof}
  Let $p=0$ and $f_0 \in \HS_0^1$. In this case, $f_0 \in
  (\HSn_0^1)^\orth$ is equivalent to
  \begin{equation}
    \label{eq:orel}
    0 = \iprod[\HS_0^1] {f_0} {g_0} =
        \iprod[\HS_0] {f_0} {g_0} + \iprod[\HS_1] {\de f_0} {\de g_0}, 
           \qquad \forall \, g_0 \in \HSn_0^1.
  \end{equation}
  However, by definition of the adjoint operator $\ded=\de_0^*$, we
  have $h_1 \in \dom \de_0^*$ iff there exists $h_0 \in \HS_0$ such
  that
  \begin{equation}
    \label{eq:def.adj}
        \iprod[\HS_0] {h_1} {\de_0 g_0} = \iprod[\HS] {h_0} {g_0} 
           \qquad \forall \, g_0 \in \HSn_0^1.
  \end{equation}
  Choosing $h_0=- f_0$, the orthogonality relation~\eqref{eq:orel}
  reads $h_1=\de f_0 \in \dom \de_0^*$ and $\de_0^* \de f_0 = -f_0$,
  i.e., $f_0 \in \mc N_0^z$. The argument for $p=1$ is similar.
\end{proof}

\begin{lemma}
  \label{lem:d.uni}
  The maps $\map \de {\mc N_0} {\mc N_1}$ and $\map \ded
  {\mc N_1} {\mc N_0}$ are unitary.
\end{lemma}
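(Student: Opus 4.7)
The plan is to prove three things in sequence: (i) $\de$ sends $\mc N_0$ into $\mc N_1$ and $\ded$ sends $\mc N_1$ into $\mc N_0$; (ii) the two maps are mutually inverse up to sign, so both are bijections; (iii) they preserve the natural (graph) norms on $\mc N_p \subset \HS_p^1$ inherited from \Lem{osum}, and hence are unitary by polarisation.

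For (i), take $f_0 \in \mc N_0$, so $f_0 \in \HS_0^2$ and $\ded \de f_0 = -f_0$. By definition of $\HS_0^2$ we already have $\de f_0 \in \dom \ded$. Moreover $\ded \de f_0 = -f_0 \in \HS_0^1 = \dom \de$, which is exactly the condition for $\de f_0 \in \HS_1^2 = \dom \de \ded$. Then
\begin{equation*}
  \de \ded (\de f_0) = \de(-f_0) = -\de f_0,
\end{equation*}
so $\de f_0 \in \mc N_1$. The argument for $\ded \colon \mc N_1 \to \mc N_0$ is symmetric, swapping the roles of $\de$ and $\ded$ and of $\HS_0^2$ and $\HS_1^2$.

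For (ii), the identities $\ded \de f_0 = -f_0$ on $\mc N_0$ and $\de \ded f_1 = -f_1$ on $\mc N_1$ together with (i) show that $-\ded \restr{\mc N_1}$ is a two-sided inverse of $\de \restr{\mc N_0}$, so both restrictions are bijective. For (iii), compute using the graph norm on $\HS_1^1$:
\begin{equation*}
  \normsqr[\HS_1^1]{\de f_0}
    = \normsqr{\de f_0} + \normsqr{\ded \de f_0}
    = \normsqr{\de f_0} + \normsqr{f_0}
    = \normsqr[\HS_0^1]{f_0},
\end{equation*}
so $\de \colon \mc N_0 \to \mc N_1$ is a surjective isometry with respect to the norms from \Lem{osum}, hence unitary; the identical computation with the roles reversed gives the same for $\ded$. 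There is no real obstacle here: the only point requiring a moment's care is checking that $\ded \de f_0 = -f_0$ actually lies in $\dom \de$ so that $\de f_0$ belongs to $\HS_1^2$, which follows because $\HS_0^2 \subset \HS_0^1$.
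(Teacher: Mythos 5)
Your proposal is correct and follows essentially the same route as the paper's proof: map into $\mc N_1$, exhibit $-\ded$ as the inverse, and verify the graph-norm isometry by the identical computation. The only difference is that you spell out the domain bookkeeping (that $\de f_0 \in \HS_1^2$) which the paper leaves implicit; this is a harmless refinement, not a different argument.
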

\begin{proof}
  If $f_0 \in \mc N_0$ then $\de \ded \de f_0 = - \de f_0$, i.e, $\de
  f_0 \in \mc N_1$. Similarly, $f_1 \in\mc N_1$ implies $\ded f_1 \in
  \mc N_0$. Furthermore, $-\ded \de f_0 = f_0$ and $\de (- \ded f_1) =
  f_1$ implies that $-\ded$ is the inverse of $\de$. Finally, $\de$ is
  an isometry because
  \begin{equation*}
    \normsqr[\HS_1^1]{\de f_0} 
    = \normsqr[\HS_1]{\de f_0} + \normsqr[\HS_0]{\ded \de f_0}
    = \normsqr[\HS_1]{\de f_0} + \normsqr[\HS_0]{f_0}
    = \normsqr[\HS_0^1]{f_0}.
  \end{equation*}
  Since $\de$ is surjective, it is therefore unitary with unitary
  inverse $-\ded$.
\end{proof}

\begin{lemma}
  \label{lem:g.inv}
  Assume that the boundary map $\gamma_0$ is proper (i.e.,
  $\HSaux^{1/2}=\ran\gamma_0 \subsetneq \HSaux$).  Define $\hat
  \gamma_0 := \gamma_0 \restr {\mc N_0}$, then $\hat \gamma_0$ is
  invertible and $\partmap{(\hat \gamma_0)^{-1}}\HSaux {\mc N_0}$ is an
  unbounded operator with domain $\dom (\hat \gamma_0)^{-1} =
  \HSaux^{1/2}$.  Furthermore, $(\hat \gamma_0)^{-1} \phi = h_0$ is the
  (unique) solution of the Dirichlet problem
  \begin{equation*}
    (\lapl[0]+1) h_0 = 0, \qquad \gamma_0 h_0 = \phi.
  \end{equation*}
\end{lemma}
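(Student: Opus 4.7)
My plan splits the conclusion into three parts: (a) $\hat\gamma_0$ is injective with range exactly $\HSaux^{1/2}$; (b) the preimage of $\phi$ under $\hat\gamma_0$ solves the stated Dirichlet problem uniquely; (c) the inverse is unbounded. Parts (a) and (b) should follow mechanically from the orthogonal decomposition $\HS_0^1 = \HSn_0^1 \oplus \mc N_0$ established in \Lem{osum}, while (c) combines that decomposition with the properness hypothesis.

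For injectivity, I would observe that if $h_0 \in \mc N_0$ satisfies $\gamma_0 h_0 = 0$, then $h_0$ lies in both summands $\mc N_0$ and $\HSn_0^1 = \ker \gamma_0$, so $h_0 \in \mc N_0 \cap \HSn_0^1 = \{0\}$ by the orthogonality of the sum. For the range computation, given $\phi \in \HSaux^{1/2} = \ran \gamma_0$, pick any $f_0 \in \HS_0^1$ with $\gamma_0 f_0 = \phi$ and decompose $f_0 = f_0^\circ + h_0$ via \Lem{osum}; since $f_0^\circ \in \HSn_0^1 = \ker \gamma_0$, I get $\gamma_0 h_0 = \gamma_0 f_0 = \phi$, so $\hat\gamma_0$ maps onto $\HSaux^{1/2}$. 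The same $h_0$ lies in $\mc N_0 \subset \HS_0^2$ with $\ded \de h_0 = -h_0$ by definition of $\mc N_0 = \mc N_0^{-1}$, which is exactly the Dirichlet problem $(\lapl[0]+1)h_0 = 0$ with $\gamma_0 h_0 = \phi$; uniqueness is the injectivity just shown.

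For unboundedness I argue by contradiction. Suppose $(\hat\gamma_0)^{-1}$ were bounded from $\HSaux^{1/2}$ (with the restricted $\HSaux$-norm) into $\HS_0^1$. Since $\ran \gamma_0 = \HSaux^{1/2}$ is dense in $\HSaux$ by \Defenum{ext.der}{bd}, it would extend by continuity to a bounded operator $T \colon \HSaux \to \HS_0^1$. Because $\mc N_0$ is closed in $\HS_0^1$, being the orthogonal complement of $\HSn_0^1$ according to \Lem{osum}, the extension $T$ must take values in $\mc N_0$. Then $\gamma_0 \circ T$ is a bounded operator on $\HSaux$ that agrees with $\id$ on the dense subset $\HSaux^{1/2}$, hence equals $\id_\HSaux$ everywhere. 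This forces $\gamma_0$ to be surjective, contradicting properness.

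The main obstacle, though minor, lies in step (c): one must know that the extension lands in the \emph{closed} subspace $\mc N_0$ and not merely somewhere in $\HS_0^1$, because otherwise the composition with $\gamma_0$ would not yield the contradiction with properness. The required closedness is an immediate consequence of \Lem{osum}, so overall the argument should be short and essentially formal.
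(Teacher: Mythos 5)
Your proposal is correct and follows essentially the same route as the paper: invertibility and the identification of $(\hat\gamma_0)^{-1}\phi$ with the Dirichlet solution both come from the orthogonal decomposition $\HS_0^1=\HSn_0^1\oplus\mc N_0$ of \Lem{osum}, and unboundedness is derived from properness via the density of $\HSaux^{1/2}$. Your phrasing of the last step (extend $(\hat\gamma_0)^{-1}$ by continuity to $T$ and compose with $\gamma_0$) is only a minor variant of the paper's (a bounded inverse would make $\hat\gamma_0$ a topological isomorphism from the complete space $\mc N_0$ onto $\HSaux^{1/2}$, forcing $\HSaux^{1/2}$ to be closed and hence all of $\HSaux$); note that your stated ``main obstacle'' is not actually one, since $\gamma_0\circ T=\id$ on a dense set already gives surjectivity of $\gamma_0$ without knowing that $T$ lands in $\mc N_0$.
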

\begin{proof}
  The operator $\hat \gamma_0$ is invertible since $(\ker
  \gamma_0)^\orth = (\HSn_0^1)^\orth=\mc N_0$ by \Lem{osum}. If $(\hat
  \gamma_0)^{-1}$ were be bounded, then $\hat \gamma_0$ would be a
  topological isomorphism of $\mc N_0$ and $\ran
  \gamma_0=\HSaux^{1/2}$, in particular, $\HSaux^{1/2}$ would be
  closed in $\HSaux$, and by the density, we would have
  $\HSaux^{1/2}=\HSaux$ --- a contradiction. The last assertion is an
  immediate consequence of \Lem{osum} and the definition of the
  inverse map $(\hat \gamma_0)^{-1}$.
\end{proof}

\begin{definition}
  \label{def:norm.g12}
  We endow $\HSaux^{1/2}$ with the norm
  \begin{equation*}
    \norm[\HSaux^{1/2}] \phi := \norm[\HS^1_0]{(\hat \gamma_0)^{-1}\phi}.
  \end{equation*}
\end{definition}
\begin{lemma}
  \label{lem:bd.map}
  Assume that the boundary map $\gamma_0$ is proper (i.e.,
  $\HSaux^{1/2}=\ran\gamma_0 \subsetneq \HSaux$), then the following
  assertions hold:
  \begin{enumerate}
  \item We have $\norm[\HSaux] \phi \le \norm{\gamma_0}
    \norm[\HSaux^{1/2}] \phi$ for $\phi \in \HSaux^{1/2}$.
  \item The operator $\gamma_0 \gamma_0^* \ge 0$ is invertible in
    $\HSaux$, and
      \begin{equation*}
        \Lambda := (\gamma_0 \gamma_0^*)^{-1}
        = ((\hat \gamma_0)^{-1})^*(\hat \gamma_0)^{-1} 
        \ge \frac 1 {\normsqr{\gamma_0}}.
      \end{equation*}
      We define the associated scale of Hilbert spaces by
      \begin{equation*}
        \HSaux^s := \dom \Lambda^s, \qquad
        \norm[\HSaux^s] \phi := \norm[\HSaux]{\Lambda^s \phi}
      \end{equation*}
      for $s \ge 0$ (and the dual with respect to
      $(\cdot,\cdot)_\HSaux$ for $s < 0$).
    \item The operator $\partmap{((\hat \gamma_0)^{-1})^*}{\mc
        N_0}\HSaux$ is unbounded with domain% \look{We could
%         call $\HS^{3/2} \mc N_0 := \dom ((\hat \gamma_0)^{-1})^*$.}
      \begin{equation*}
        \dom ((\hat \gamma_0)^{-1})^* = 
        \set{f_0 \in \mc N_0}
            { \gamma_0 f_0 \in \dom \Lambda = \HSaux^1}.
      \end{equation*}
    \item The operator $\map{\gamma_0^*}{\HSaux}{\HS_0^1}$ is bounded,
      and $\gamma_0^* \phi=h_0$ is the unique Neumann solution, i.e.,
    \begin{equation*}
      (\lapl[0]+1) h_0 = 0, \qquad \gamma_0 h_0 \in \HSaux^{1}, 
                             \quad \Lambda \gamma_0 h_0 = \phi.
    \end{equation*}
  \end{enumerate}
\end{lemma}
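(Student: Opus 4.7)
The overall plan rests on one observation: because $\gamma_0$ annihilates $\HSn_0^1$, its Hilbert adjoint $\gamma_0^*$ takes values in $(\HSn_0^1)^\orth = \mc N_0$ (where the orthogonal complement is in $\HS_0^1$, using \Lem{osum}). Consequently $\gamma_0^*$ coincides with the adjoint $\hat\gamma_0^*$ of the bounded injective map $\map{\hat\gamma_0}{\mc N_0}{\HSaux}$ that has dense but non-closed range $\HSaux^{1/2}$ (by \Lem{g.inv}). Once this identification is made, all four assertions reduce to standard adjoint calculus for $\hat\gamma_0$.

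Assertion (i) is immediate: writing $\phi = \gamma_0\bigl((\hat\gamma_0)^{-1}\phi\bigr)$ and applying the operator bound of $\gamma_0$, together with \Def{norm.g12}, yields the stated inequality. For (ii), substitute $\gamma_0\gamma_0^* = \hat\gamma_0 \hat\gamma_0^*$. Since $\hat\gamma_0$ is injective and $\ran\hat\gamma_0 = \HSaux^{1/2}$ is dense in $\HSaux$, both $\hat\gamma_0$ and $\hat\gamma_0^*$ are injective; hence $\hat\gamma_0\hat\gamma_0^*$ is a bounded, injective, nonnegative self-adjoint operator on $\HSaux$, and its inverse $\Lambda$ is a (generally unbounded) self-adjoint operator. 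The factorization
\begin{equation*}
  \Lambda = (\hat\gamma_0\hat\gamma_0^*)^{-1}
  = (\hat\gamma_0^*)^{-1}(\hat\gamma_0)^{-1}
  = ((\hat\gamma_0)^{-1})^*(\hat\gamma_0)^{-1}
\end{equation*}
follows from the general identity $(T^*)^{-1} = (T^{-1})^*$ for closed densely defined $T$. The lower bound $\Lambda \ge \|\gamma_0\|^{-2}$ is the inverse of the obvious $\gamma_0\gamma_0^* \le \|\gamma_0\|^2$ in the order of self-adjoint operators.

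For (iii), the domain description is read off directly from the factorization in (ii): $\phi \in \dom\Lambda$ iff $f_0 := (\hat\gamma_0)^{-1}\phi \in \dom((\hat\gamma_0)^{-1})^*$, i.e.\ iff $\gamma_0 f_0 = \phi \in \HSaux^1 = \dom\Lambda$. Unboundedness of $((\hat\gamma_0)^{-1})^*$ holds since a bounded adjoint would, by reflexivity, force $(\hat\gamma_0)^{-1}$ itself to be bounded, contradicting \Lem{g.inv}. For (iv), $\gamma_0^*$ is bounded as the Hilbert adjoint of the bounded $\gamma_0$; setting $h_0 := \gamma_0^*\phi$ we have $h_0 \in \mc N_0$, so $(\lapl[0]+1)h_0 = 0$, while
\begin{equation*}
  \gamma_0 h_0 = \gamma_0\gamma_0^* \phi = \Lambda^{-1}\phi \in \ran\Lambda^{-1} = \dom\Lambda = \HSaux^1,
  \qquad \Lambda \gamma_0 h_0 = \phi.
\end{equation*}
Uniqueness follows because $h_0 \in \mc N_0$ is determined by $\gamma_0 h_0$ via the bijection $\hat\gamma_0$. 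The subtle step I expect to be the main obstacle is the identification $\gamma_0^* = \hat\gamma_0^*$ together with the equality (not merely inclusion) of domains in the factorization $\Lambda = ((\hat\gamma_0)^{-1})^*(\hat\gamma_0)^{-1}$; one must be careful that $\mc N_0$ carries the induced $\HS_0^1$-inner product, so that the two adjoints are genuinely computed against compatible pairings.
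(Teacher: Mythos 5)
Your proof is correct and follows essentially the same route as the paper: both rest on the identifications $\gamma_0\gamma_0^*=\hat\gamma_0\hat\gamma_0^*$, $\ran\gamma_0^*\subset(\ker\gamma_0)^\orth=\mc N_0$, and the factorization $\Lambda=((\hat\gamma_0)^{-1})^*(\hat\gamma_0)^{-1}$. The only cosmetic difference is that you phrase (ii) and (iv) via operator identities (e.g.\ $(T^{-1})^*=(T^*)^{-1}$ and $\gamma_0\gamma_0^*=\Lambda^{-1}$) where the paper verifies the same facts by inner-product computations in $\HSaux^{1/2}$; your remark about the induced inner product on $\mc N_0$ correctly addresses the one genuinely delicate point.
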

\begin{remark}
  If $\gamma_0$ is not proper (i.e., if $\gamma_0$ is surjective,
  i.e., $\HSaux^{1/2}=\HSaux$), then all the above assertions remain
  valid except for the fact that $(\hat \gamma_0)^{-1}$, $((\hat
  \gamma_0)^{-1})^*$ and $\Lambda$ are \emph{bounded} operators.
\end{remark}
\begin{proof}
  The first assertion follows from
  \begin{equation*}
    \norm[\HSaux] \phi 
    =   \norm[\HSaux] {\hat \gamma_0 (\hat \gamma_0)^{-1} \phi}
    \le \norm{\gamma_0} \norm[\HS^1] {(\hat \gamma_0)^{-1} \phi}
    =   \norm{\gamma_0} \norm[\HSaux^{1/2}] \phi.
  \end{equation*}
  To prove the second, note that $\gamma_0 \gamma_0^* = \hat \gamma_0
  \hat \gamma_0^*$ is bijective and
  \begin{equation*}
    \iprod[\HSaux^{1/2}] \phi \phi
    = \iprod[\HS^1] {(\hat \gamma_0)^{-1}\phi} {(\hat \gamma_0)^{-1}\phi}
    = \iprod[\HSaux] \phi 
              {((\hat \gamma_0)^{-1})^* (\hat \gamma_0)^{-1}\phi}
    = \iprod[\HSaux] \phi 
              {\Lambda \phi}
  \end{equation*}
  if $(\hat \gamma_0)^{-1} \phi \in \dom ((\hat \gamma_0)^{-1})^*$, i.e,
  $\phi \in \dom \Lambda$. Furthermore, $\norm{\Lambda^{-1}}\le
  \normsqr{\gamma_0}$.

  The third assertion is a consequence of \Lem{g.inv}, and the domain
  characterisation can be seen readily. To prove the fourth assertion,
  take $h_0=\gamma_0^*\phi \in \ran \gamma_0^* \subset (\ker
  \gamma_0)^\orth= \mc N_0$; in this case
  \begin{equation*}
    \iprod[\HS_0^1] {h_0}{f_0} = \iprod[\HSaux]\phi {\gamma_0 f_0}
  \end{equation*}
  for all $f_0 \in \HS_0^1$. If $f_0 \in \mc N_0$, then
  \begin{equation*}
    \iprod[\HS_0^1] {h_0}{f_0} 
     = \iprod[\HSaux^{1/2}]{\gamma_0 h_0} {\gamma_0 f_0}
  \end{equation*}
  by definition of the norm on $\HSaux^{1/2}$. But the latter term
  equals $\iprod[\HSaux]{\Lambda\gamma_0 h_0} {\gamma_0 f_0}$ if
  $\gamma_0 h_0 \in \dom \Lambda$, and thus $\phi = \Lambda \gamma_0
  h_0$.
\end{proof}
\begin{remark}
  Note that $\HSaux^{-1/2}$ is the completion of $\HSaux$ with respect
  to the norm $\norm[\HSaux^{-1/2}]\phi = \norm[\HS_0^1]{\gamma_0
    \phi}$.%\look{Check!}
\end{remark}
\begin{definition}
  We define the \emph{boundary map of order $1$} as
  \begin{equation*}
    \map {\gamma_1}{\HS_1^1} \HSaux, \qquad
    \gamma_1 := -\gamma_0 \ded P_1
  \end{equation*}
  where $P_p$ is the orthogonal projection in $\HS_p^1$ onto the
  subspace $\mc N_p$.
\end{definition}

\begin{lemma}
  \label{lem:g1.ker}
  We have $\ker \gamma_1 = \HSn_1^1$, and $\map
  {\gamma_1}{\HS_1^1}{\HSaux}$ is bounded with norm
  $\norm{\gamma_1}=\norm{\gamma_0}$. Furthermore, $\ran
  \gamma_1=\HSaux^{1/2}$ and $\hat \gamma_1 := \gamma_1
  \restr{\mc N_1}$ is a unitary map from $\mc N_1$ onto
  $\HSaux^{1/2}$.
\end{lemma}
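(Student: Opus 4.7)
The plan is to reduce everything to two facts already at my disposal: the orthogonal decomposition $\HS_p^1 = \HSn_p^1 \oplus \mc N_p$ from \Lem{osum}, and the two isometric isomorphisms
$-\ded \colon \mc N_1 \to \mc N_0$ from \Lem{d.uni} (with respect to the graph norms on $\HS_1^1$ and $\HS_0^1$) and $\hat\gamma_0 \colon \mc N_0 \to \HSaux^{1/2}$, which is isometric essentially \emph{by the definition} of $\norm[\HSaux^{1/2}]\cdot$ in \Def{norm.g12}, together with \Lem{g.inv}. The key observation that drives the whole proof is the factorisation
\begin{equation*}
  \gamma_1 \;=\; \hat\gamma_0 \circ (-\ded) \circ P_1
  \qquad\text{and in particular}\qquad
  \hat\gamma_1 \;=\; \hat\gamma_0 \circ \bigl(-\ded\restr{\mc N_1}\bigr),
\end{equation*}
which follows directly from the definition $\gamma_1 = -\gamma_0 \ded P_1$, the inclusion $\ded(\mc N_1) \subset \mc N_0$, and $\gamma_0\restr{\mc N_0} = \hat\gamma_0$.

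From this, the kernel identity and range/unitarity statements are essentially free. The inclusion $\HSn_1^1 \subset \ker \gamma_1$ is immediate from $\HSn_1^1 = (\mc N_1)^\orth$ in $\HS_1^1$ (\Lem{osum}), which gives $P_1 f_1 = 0$; conversely, if $\gamma_1 f_1 = 0$ then the factorisation together with injectivity of $\hat\gamma_0$ (\Lem{g.inv}) and of $-\ded\restr{\mc N_1}$ (\Lem{d.uni}) forces $P_1 f_1 = 0$, i.e.\ $f_1 \in \HSn_1^1$. The second factorisation writes $\hat\gamma_1$ as a composition of two isometric surjections onto $\HSaux^{1/2}$, so it is itself a unitary $\mc N_1 \to \HSaux^{1/2}$, and $\ran \gamma_1 = \ran \hat\gamma_1 = \HSaux^{1/2}$.

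For the norm equality I need a two-sided argument. The bound $\norm{\gamma_1} \le \norm{\gamma_0}$ falls out of the factorisation and unitarity of $-\ded\restr{\mc N_1}$:
\begin{equation*}
  \norm[\HSaux]{\gamma_1 f_1}
  \le \norm{\gamma_0}\, \bignorm[\HS_0^1]{-\ded P_1 f_1}
  =   \norm{\gamma_0}\, \norm[\HS_1^1]{P_1 f_1}
  \le \norm{\gamma_0}\, \norm[\HS_1^1]{f_1}.
\end{equation*}
For the reverse, I build a test element. Given $g_0 \in \HS_0^1$ set $g_0^{\mc N} := P_0 g_0 \in \mc N_0$ and $f_1 := \de g_0^{\mc N} \in \mc N_1$. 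Unitarity of $\de\restr{\mc N_0}$ gives $\norm[\HS_1^1]{f_1} = \norm[\HS_0^1]{g_0^{\mc N}} \le \norm[\HS_0^1]{g_0}$, while $\ded \de g_0^{\mc N} = -g_0^{\mc N}$ and $g_0 - g_0^{\mc N} \in \HSn_0^1 = \ker \gamma_0$ combine to give $\gamma_1 f_1 = -\gamma_0 \ded f_1 = \gamma_0 g_0^{\mc N} = \gamma_0 g_0$, whence $\norm[\HSaux]{\gamma_0 g_0} \le \norm{\gamma_1}\,\norm[\HS_0^1]{g_0}$.

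The content is essentially the statement that $\gamma_1$ is $\gamma_0$ transported through the unitary pipeline $-\ded\colon \mc N_1 \to \mc N_0$, so the only real obstacle is bookkeeping — keeping track of which norm lives on which space ($\HS_0^1$, $\HS_1^1$, $\HSaux$, $\HSaux^{1/2}$) and invoking the right unitary from \Lem{d.uni} or \Def{norm.g12} at each step; once the factorisation is written down, every assertion of the lemma is a one-line consequence.
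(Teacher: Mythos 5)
Your proof is correct and follows essentially the same route as the paper's: the factorisation $\gamma_1=\hat\gamma_0\circ(-\ded)\circ P_1$ is exactly what the paper uses implicitly, with the kernel, range, boundedness and unitarity statements all read off from \Lem{osum}, \Lem{d.uni} and \Def{norm.g12}. Your argument for the reverse inequality $\norm{\gamma_0}\le\norm{\gamma_1}$ (passing through $P_0 g_0$ for arbitrary $g_0\in\HS_0^1$) is in fact slightly more explicit than the paper's, which only exhibits $f_1=\de f_0$ for $f_0\in\mc N_0$ and leaves the reduction to $\mc N_0$ tacit.
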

\begin{proof}
  If $f_1 \in \HSn_1^1=(\mc N_1)^\orth$, then $\gamma_1 f_1 = 0$
  since $P_1 f_1=0$. If $f_1 \in \mc N_1$, then $\gamma_1
  f_1 = - \gamma_0 \ded f_1=0$ iff $f_1=0$ since $\ded$ is unitary from
  $\mc N_1$ onto $\mc N_0 = (\ker \gamma_0)^\orth$.

  The boundedness follows from
  \begin{equation*}
    \norm[\HSaux]{\gamma_1 f_1}
    \le \norm[\HSaux]{\gamma_0 \ded P_1 f_1}
    \le \norm{\gamma_0}\norm[\HS_0^1]{\ded P_1 f_1}
    =   \norm{\gamma_0}\norm[\HS_1^1]{P_1 f_1}
    \le \norm{\gamma_0}\norm[\HS_1^1]{f_1}
  \end{equation*}
  by \Lem{d.uni}. Furthermore, for $f_0 \in \mc N_0$ set $f_1 := \de
  f_0$, then $\gamma_1 f_1 = - \gamma_0 \ded \de f_0 = \gamma_0 f_0$.
  In particular, $\norm{\gamma_1}=\norm{\gamma_0}$.  Finally,
  \begin{equation*}
    \norm[\HSaux^{1/2}]{\hat \gamma_1 f_1} =
    \norm[\HSaux^{1/2}]{\gamma_0 \ded f_1} =
    \norm[\HS_0^1]{\ded f_1} =
    \norm[\HS_1^1]{f_1}
  \end{equation*}
  for $f_1 \in \mc N_1$, since $\ded f_1 \in \mc N_0$ and by
  \Lem{d.uni}.

\end{proof}
\begin{lemma}
  \label{lem:green}
  The (abstract) Green's formula holds, namely,
  \begin{equation*}
    \iprod {\de f_0} {g_1} - 
    \iprod {f_0} {\ded g_1} =
    \iprod[\HSaux^{1/2}]{\gamma_0 f_0}{\gamma_1 g_1} =
    ({\gamma_0 f_0},{\wt \gamma_1 g_1})_\HSaux
  \end{equation*}
  where $\map{\wt \gamma_1 := \Lambda
    \gamma_1}{\HS_1^1}{\HSaux^{-1/2}}$.
\end{lemma}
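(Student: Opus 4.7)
The plan is to use the orthogonal decomposition $\HS_p^1 = \HSn_p^1 \oplus \mc N_p$ from \Lem{osum} to reduce Green's formula to the ``harmonic part,'' and then to identify the remaining pairing with the $\HSaux^{1/2}$ inner product via $\hat\gamma_0$.

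First I would write $f_0 = f_0^{\circ} + f_0^{\mc N}$ with $f_0^{\circ} \in \HSn_0^1$ and $f_0^{\mc N} \in \mc N_0$, and similarly $g_1 = g_1^{\circ} + g_1^{\mc N}$ with $g_1^{\circ} \in \HSn_1^1$ and $g_1^{\mc N} \in \mc N_1$. Since $\HSn_0^1 = \dom \de_0$ and $\ded = \de_0^*$, the definition of the adjoint immediately yields $\iprod{\de f_0^{\circ}}{g_1} = \iprod{f_0^{\circ}}{\ded g_1}$ for every $g_1 \in \HS_1^1 = \dom \ded$; and since $\HSn_1^1 = \dom \de^* \subset \dom\ded$ with $\de^* \subset \ded$, we likewise get $\iprod{\de f_0^{\mc N}}{g_1^{\circ}} = \iprod{f_0^{\mc N}}{\ded g_1^{\circ}}$. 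Hence all mixed contributions drop and the left-hand side reduces to
\begin{equation*}
  \iprod{\de f_0}{g_1} - \iprod{f_0}{\ded g_1}
  = \iprod{\de f_0^{\mc N}}{g_1^{\mc N}} - \iprod{f_0^{\mc N}}{\ded g_1^{\mc N}}.
\end{equation*}
Meanwhile $\gamma_0 f_0 = \hat\gamma_0 f_0^{\mc N}$ (because $f_0^{\circ} \in \ker \gamma_0$) and $\gamma_1 g_1 = \gamma_1 g_1^{\mc N}$ (because $\ker \gamma_1 = \HSn_1^1$ by \Lem{g1.ker}), so the right-hand side is unchanged by the projection.

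Next I would exploit \Lem{d.uni}: set $G_0 := -\ded g_1^{\mc N} \in \mc N_0$, so that $\de G_0 = g_1^{\mc N}$, and recall $\ded \de f_0^{\mc N} = -f_0^{\mc N}$. Substituting these identities,
\begin{equation*}
  \iprod{\de f_0^{\mc N}}{g_1^{\mc N}} - \iprod{f_0^{\mc N}}{\ded g_1^{\mc N}}
  = \iprod[\HS_1]{\de f_0^{\mc N}}{\de G_0} + \iprod[\HS_0]{f_0^{\mc N}}{G_0}
  = \iprod[\HS_0^1]{f_0^{\mc N}}{G_0}.
\end{equation*}
From the proof of \Lem{g1.ker} we know $\hat\gamma_1 \de = \hat\gamma_0$ on $\mc N_0$, so $\gamma_1 g_1^{\mc N} = \hat\gamma_1(\de G_0) = \hat\gamma_0 G_0$. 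By the definition of the norm (and corresponding inner product) on $\HSaux^{1/2}$, which is the pullback under $\hat\gamma_0$ of the $\HS_0^1$ inner product, the right-hand side equals $\iprod[\HSaux^{1/2}]{\hat\gamma_0 f_0^{\mc N}}{\hat\gamma_0 G_0} = \iprod[\HSaux^{1/2}]{\gamma_0 f_0}{\gamma_1 g_1}$, which is the first claimed identity.

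For the second equality I would use \Lem{bd.map}(ii): polarising the identity $\iprod[\HSaux^{1/2}]{\phi}{\phi} = \iprod[\HSaux]{\phi}{\Lambda\phi}$ gives $\iprod[\HSaux^{1/2}]{\phi}{\psi} = \iprod[\HSaux]{\phi}{\Lambda\psi}$ whenever $\psi \in \dom\Lambda = \HSaux^1$. The main subtlety — and what I expect to be the principal technical point — is that $\gamma_1 g_1$ lies only in $\HSaux^{1/2}$, so $\Lambda\gamma_1 g_1$ lives in $\HSaux^{-1/2}$ rather than in $\HSaux$. I would therefore interpret $(\gamma_0 f_0, \wt\gamma_1 g_1)_\HSaux$ as the duality pairing between $\HSaux^{1/2}$ and $\HSaux^{-1/2}$ induced by $(\cdot,\cdot)_\HSaux$ on the common dense subspace $\HSaux^1$, and extend the polarised identity by density of $\HSaux^1 \subset \HSaux^{1/2}$, using continuity of both sides in $\psi$ with respect to the $\HSaux^{1/2}$ topology. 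This yields $\iprod[\HSaux^{1/2}]{\gamma_0 f_0}{\gamma_1 g_1} = (\gamma_0 f_0, \Lambda \gamma_1 g_1)_\HSaux = (\gamma_0 f_0, \wt\gamma_1 g_1)_\HSaux$, completing the proof.
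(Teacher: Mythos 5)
Your proof is correct and follows essentially the same route as the paper's: decompose via \Lem{osum}, observe that the contributions from $\HSn_0^1$ and $\HSn_1^1$ vanish by adjointness, and identify the remaining pairing on $\mc N_0\times\mc N_1$ with the $\HS_0^1$ inner product pulled back through $\hat\gamma_0$ (\Def{norm.g12}). You additionally spell out the duality-pairing interpretation of the final equality, which the paper dismisses as obvious; that is a welcome clarification but not a departure in method.
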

\begin{proof}
  If $f_0 \in \HSn_0^1$, then the LHS vanishes since $\ded=\de_0^*$,
  and so is the RHS, since $\gamma_0 f_0=0$. Similarly, if $g_1 \in
  \HSn_1^1=\dom \de^*$, then the LHS vanishes since $\ded
  g_1=\de^*g_1$ and so is the RHS, because $\gamma_1 g_1=0$ by
  \Lem{g1.ker}. For $f_0 \in \mc N_0$ and $g_1 \in \mc N_1$, we have
  \begin{align*}
    \iprod {\de f_0} {g_1} - 
    \iprod {f_0} {\ded g_1} 
    &= -\iprod {\de f_0} {\de \ded g_1} - \iprod {f_0} {\ded g_1}\\
    &= -\iprod[\HS_0^1] {f_0} {\ded g_1} 
     = \iprod[\HSaux^{1/2}]{\gamma_0 f_0}{-\gamma_0 \ded g_1}
  \end{align*}
  by \Def{norm.g12}. The last assertion is obvious.
\end{proof}

\begin{corollary}
  \label{cor:green}
  We have%\look{also for $p=1$? Check rest of ...}
  \begin{align*}
    \iprod {\lapl[0] f_0} {g_0} -
        \iprod {f_0} {\lapl[0] g_0}
    &=  \iprod[\HSaux^{1/2}] {\gamma_0 f_0} {\gamma_1 \de g_0}
      - \iprod[\HSaux^{1/2}] {\gamma_1 \de f_0} {\gamma_0 g_0}\\
    &=  \iprod[\HSaux] {\gamma_0 f_0} {\wt \gamma_1 \de g_0}
     - \iprod[\HSaux] {\wt \gamma_1 \de f_0} {\gamma_0 g_0}
  \end{align*}
  for $f_0, g_0 \in \HS_0^2$.
\end{corollary}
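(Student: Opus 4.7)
The strategy is to apply the abstract Green's formula from \Lem{green} twice, with the right choices of $0$-form and $1$-form, and then subtract. This is purely a matter of unwinding definitions, so the only thing to be careful about is that we are allowed to apply \Lem{green} in each instance.

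First I would verify the applicability. By hypothesis $f_0,g_0 \in \HS_0^2 = \dom \ded \de$, which by \Def{lapl} means that $f_0, g_0 \in \HS_0^1 = \dom \de$ and that $\de f_0, \de g_0 \in \HS_1^1 = \dom \ded$. Hence the pairs $(f_0,\de g_0)$ and $(g_0,\de f_0)$ both belong to $\HS_0^1\times\HS_1^1$, which is the set on which \Lem{green} is valid.

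Next I would apply \Lem{green} once with the $1$-form chosen to be $g_1=\de g_0$, giving
\begin{equation*}
    \iprod{\de f_0}{\de g_0} - \iprod{f_0}{\lapl[0] g_0}
    = \iprod[\HSaux^{1/2}]{\gamma_0 f_0}{\gamma_1 \de g_0},
\end{equation*}
and once with $(f_0,g_1)$ replaced by $(g_0,\de f_0)$, giving
\begin{equation*}
    \iprod{\de g_0}{\de f_0} - \iprod{g_0}{\lapl[0] f_0}
    = \iprod[\HSaux^{1/2}]{\gamma_0 g_0}{\gamma_1 \de f_0}.
\end{equation*}
Taking the complex conjugate of the second identity turns it into
\begin{equation*}
    \iprod{\de f_0}{\de g_0} - \iprod{\lapl[0] f_0}{g_0}
    = \iprod[\HSaux^{1/2}]{\gamma_1 \de f_0}{\gamma_0 g_0}.
\end{equation*}
Subtracting from the first displayed equation cancels the common term $\iprod{\de f_0}{\de g_0}$ and yields the first claimed identity.

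Finally, for the second equality I would invoke the relation between the inner products on $\HSaux^{1/2}$ and $\HSaux$ established in the proof of \Lemenum{bd.map}{ii}: polarising $\iprod[\HSaux^{1/2}]{\phi}{\phi}=\iprod[\HSaux]{\phi}{\Lambda\phi}$ one obtains $\iprod[\HSaux^{1/2}]{\phi}{\psi}=\iprod[\HSaux]{\phi}{\Lambda\psi}$ whenever $\psi\in\dom\Lambda$. Applied to $\phi=\gamma_0 f_0$ and $\psi=\gamma_1\de g_0$ (and symmetrically to the other term), and recalling the definition $\wt\gamma_1:=\Lambda\gamma_1$ from \Lem{green}, this converts each $\HSaux^{1/2}$ pairing into an $\HSaux$ pairing with $\wt\gamma_1$ on the right slot. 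No obstacle is expected; the only thing worth a line of comment in the write-up is that the range condition $\gamma_1\de g_0\in\HSaux^{1/2}=\dom\Lambda^{1/2}$ suffices to make sense of $\Lambda\gamma_1\de g_0$ as an element of $\HSaux^{-1/2}$, consistent with the codomain declared for $\wt\gamma_1$.
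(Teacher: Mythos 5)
Your proof is correct and follows exactly the route the paper intends: the corollary is stated without proof as an immediate consequence of \Lem{green}, applied once to the pair $(f_0,\de g_0)$ and once (after conjugation) to $(g_0,\de f_0)$, with the cross terms $\iprod{\de f_0}{\de g_0}$ cancelling. Your verification that $\de f_0,\de g_0\in\HS_1^1$ for $f_0,g_0\in\HS_0^2$ and your handling of the passage from the $\HSaux^{1/2}$ pairing to the $\HSaux$ pairing via $\wt\gamma_1=\Lambda\gamma_1$ are both exactly what is needed.
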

The following lemma shows that $\Lambda=\Lambda(-1)$ is the
Dirichlet-to-Neumann map for the operator $\lapl[0] + 1$:
\begin{lemma}
  \label{lem:dn}
  For $\phi \in \HSaux^{1/2}$ and $h_0 := (\hat \gamma_0)^{-1} \phi$ we
  have
  \begin{equation*}
    \Lambda \phi = \wt \gamma_1 \de h_0.
  \end{equation*}
\end{lemma}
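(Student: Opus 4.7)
The plan is to unfold both sides using the characterisation of $h_0$ together with the definition of $\gamma_1$, and to observe that the required equality reduces to a simple computation on $\mc N_0$.

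First I would record what $h_0 := (\hat\gamma_0)^{-1}\phi$ means: by \Lem{g.inv}, $h_0$ is the unique element of $\mc N_0$ satisfying $\gamma_0 h_0 = \phi$, and in particular $\ded \de h_0 = -h_0$. Next, since $\map{\de}{\mc N_0}{\mc N_1}$ is unitary by \Lem{d.uni}, we have $\de h_0 \in \mc N_1$, so the orthogonal projection $P_1$ onto $\mc N_1$ acts as the identity: $P_1 \de h_0 = \de h_0$.

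From the definition of the boundary map of order one, $\gamma_1 := -\gamma_0 \ded P_1$, this gives
\begin{equation*}
  \gamma_1 \de h_0 \;=\; -\gamma_0 \ded \de h_0 \;=\; -\gamma_0(-h_0) \;=\; \gamma_0 h_0 \;=\; \phi.
\end{equation*}
Applying $\Lambda$ to both sides and recalling $\wt\gamma_1 = \Lambda \gamma_1$ (understood as the extension $\map{\Lambda}{\HSaux^{1/2}}{\HSaux^{-1/2}}$ from \Lem{bd.map}), we conclude $\wt\gamma_1 \de h_0 = \Lambda \phi$, as claimed.

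No real obstacle is expected here; the only delicate point worth a sentence is that $\Lambda\phi$ need not lie in $\HSaux$ for general $\phi \in \HSaux^{1/2}$, so the identity is to be read in $\HSaux^{-1/2}$, which matches the codomain of $\wt\gamma_1$.
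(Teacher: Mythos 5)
Your argument is correct, but it takes a genuinely different (and more computational) route than the paper's. You use the explicit definition $\gamma_1 = -\gamma_0\,\ded P_1$ together with the two facts that $h_0\in\mc N_0$ (so $\ded\de h_0=-h_0$) and that $\de$ maps $\mc N_0$ unitarily onto $\mc N_1$ by \Lem{d.uni} (so $P_1\de h_0=\de h_0$), to get the exact identity $\gamma_1\de h_0=\gamma_0 h_0=\phi$ in $\HSaux^{1/2}$, and then apply $\Lambda$. The paper instead argues by duality: it applies the abstract Green's formula of \Lem{green} to $\iprod{\de f_0}{\de h_0}-\iprod{f_0}{\lapl[0]h_0}$ for $f_0\in\mc N_0$, identifying this quantity on the one hand with $({\gamma_0 f_0},{\wt\gamma_1\de h_0})_{\HSaux}$ and on the other hand, via $\lapl[0]h_0=-h_0$ and the definition of the $\HSaux^{1/2}$-norm, with $\iprod[\HSaux^{1/2}]{\gamma_0 f_0}{\phi}=({\gamma_0 f_0},{\Lambda\phi})_{\HSaux}$; the claim then follows since $\gamma_0 f_0$ ranges over the dense subspace $\HSaux^{1/2}$. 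Your computation is essentially the one the paper records later to show $Q_0^{-1}=\id_{\HSaux^{1/2}}$, and it is arguably cleaner at $z=-1$; the paper's weak formulation is the one that exhibits $\Lambda$ directly as a Dirichlet-to-Neumann operator and serves as the template for general $z$, where $\de h_0$ no longer lies in $\mc N_1=\mc N_1^{-1}$ and the projection $P_1$ would not drop out. Your closing caveat --- that the identity is to be read in $\HSaux^{-1/2}$, matching the codomain of $\wt\gamma_1$ fixed in \Lem{green} --- is exactly the right point to flag.
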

\begin{proof}
  By \Lem{green}, we have
  \begin{equation*}
    \iprod {\de f_0} {\de h_0} - \iprod {f_0} {\lapl[0] h_0}
    = ({\gamma_0 f_0},{\wt \gamma_1 \de h_0})_\HSaux.
  \end{equation*}
  On the other hand, we have
  \begin{multline*}
    \iprod {\de f_0} {\de h_0} - \iprod {f_0} {\lapl[0] h_0}
    = \iprod[\HS_0^1] {f_0}{h_0}\\
    = \iprod[\HSaux^{1/2}] {\gamma_0 f_0}{\gamma_0 h_0}
    = \iprod[\HSaux^{1/2}] {\gamma_0 f_0} \phi
    = ({\gamma_0 f_0},\Lambda \phi)_\HSaux.
  \end{multline*}
  for $f_0, h_0 \in \mc N_0$.
\end{proof}
\begin{remark}
  \label{rem:dn}
  The map $\wt \gamma_1$ is indeed the boundary map occuring in the
  applications (see \Sec{example}). Namely, the Green's formula is usually
  formulated with a boundary integral given as an inner product of
  $\HSaux$ rather than $\HSaux^{1/2}$. In particular, $\wt \gamma_1
  \de h_0$ is the ``normal derivative at the boundary'' (in the case
  of a manifold with boundary). 
\end{remark}

The boundary maps are also bounded as maps with target space
$\HSaux^{1/2}$:
\begin{lemma}
  \label{lem:bd.g12}
  The operators $\map {\gamma_p}{\HS_p^1}{\HSaux^{1/2}}$ are bounded
  with norm bounded by~$1$.
  \end{lemma}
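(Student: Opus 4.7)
The plan is to exploit the orthogonal decomposition $\HS_p^1 = \HSn_p^1 \oplus \mc N_p$ from \Lem{osum} and observe that each boundary map $\gamma_p$ already acts as an isometry on the ``harmonic'' summand $\mc N_p$ by construction. Once this is set up, the inequality $\norm[\HSaux^{1/2}]{\gamma_p f_p} \le \norm[\HS_p^1]{f_p}$ follows just from orthogonality; in fact the restrictions $\hat\gamma_p$ turn out to be isometries (unitaries, with the $\HSaux^{1/2}$-norm), so the norm bound is sharp.

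More concretely, I would fix $f_p \in \HS_p^1$ and decompose it as $f_p = f_p^\circ + f_p^N$ with $f_p^\circ \in \HSn_p^1$ and $f_p^N \in \mc N_p$, the sum being $\HS_p^1$-orthogonal by \Lem{osum}. Since $\HSn_0^1 = \ker \gamma_0$ by assumption and $\HSn_1^1 = \ker \gamma_1$ by \Lem{g1.ker}, the component $f_p^\circ$ is killed by $\gamma_p$, and hence $\gamma_p f_p = \hat\gamma_p f_p^N$. Pythagoras in $\HS_p^1$ then gives $\norm[\HS_p^1]{f_p^N} \le \norm[\HS_p^1]{f_p}$.

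For $p=0$, applying \Def{norm.g12} directly to $\hat\gamma_0 f_0^N \in \HSaux^{1/2}$ yields
\begin{equation*}
  \norm[\HSaux^{1/2}]{\gamma_0 f_0}
  = \norm[\HSaux^{1/2}]{\hat\gamma_0 f_0^N}
  = \norm[\HS_0^1]{(\hat\gamma_0)^{-1}\hat\gamma_0 f_0^N}
  = \norm[\HS_0^1]{f_0^N}
  \le \norm[\HS_0^1]{f_0}.
\end{equation*}
For $p=1$, \Lem{g1.ker} says that $\hat\gamma_1 \colon \mc N_1 \to \HSaux^{1/2}$ is unitary, whence
\begin{equation*}
  \norm[\HSaux^{1/2}]{\gamma_1 f_1}
  = \norm[\HSaux^{1/2}]{\hat\gamma_1 f_1^N}
  = \norm[\HS_1^1]{f_1^N}
  \le \norm[\HS_1^1]{f_1}.
\end{equation*}

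There is really no obstacle here; the only subtlety is to verify that in both cases the relevant kernel coincides with $\HSn_p^1$ (immediate for $p=0$ by definition of a boundary map, and supplied by \Lem{g1.ker} for $p=1$) and to recognise that \Def{norm.g12} was tailor-made so that $\hat\gamma_0$ is an isometry onto $\HSaux^{1/2}$. The only care needed is the case when $\gamma_0$ is not proper: then $(\hat\gamma_0)^{-1}$ is defined on all of $\HSaux$ but still serves to define $\norm[\HSaux^{1/2}]{\cdot}$ by the same formula, so the argument is unchanged.
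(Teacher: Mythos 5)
Your proof is correct and follows essentially the same route as the paper's: the paper writes the argument with the orthogonal projection $P_p$ onto $\mc N_p$ (so $P_p f_p$ is your $f_p^N$), uses $\gamma_p f_p = \gamma_p P_p f_p$, the definition of the $\HSaux^{1/2}$-norm, and $\norm[\HS_p^1]{P_p f_p}\le\norm[\HS_p^1]{f_p}$. The only cosmetic difference is that for $p=1$ you invoke the unitarity of $\hat\gamma_1$ from \Lem{g1.ker} directly, whereas the paper re-expands $\gamma_1=-\gamma_0\ded P_1$ and cites \LemS{d.uni}{g.inv}; these amount to the same computation.
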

\begin{proof}
  For $p=0$, we have
  \begin{equation*}
    \norm[\HSaux^{1/2}]{\gamma_0 f_0}
    = \norm[\HS_0^1]{(\hat \gamma_0)^{-1}\gamma_0 f_0}
    = \norm[\HS_0^1]{(\hat \gamma_0)^{-1}\gamma_0 P_0 f_0}
    = \norm[\HS_0^1]{P_0 f_0}
    \le  \norm[\HS_0^1]{f_0},
  \end{equation*}
  since $\gamma_0 f_0 = \gamma_0 P_0 f_0$.  For $p=1$, we obtain
  \begin{equation*}
    \norm[\HSaux^{1/2}]{\gamma_1 f_1}
    = \norm[\HS_0^1]{(\hat \gamma_0)^{-1}\gamma_0 \ded P_1 f_1}
    = \norm[\HS_0^1]{\ded P_1 f_1}
    = \norm[\HS_1^1]{P_1 f_1}
    \le  \norm[\HS_1^1]{f_1}
  \end{equation*}
  using \LemS{d.uni}{g.inv}.
\end{proof}

In order to define the Dirichlet-to-Neumann map also for other
resolvent values~$z$, we need to provide results similar to those in
\LemS{osum}{g.inv} for general $z$. Write
\begin{equation}
  \label{eq:def.sigma}
  \Sigma_0 := \spec {\laplD[0]}, \qquad
  \Sigma_1 := \spec {\laplN[1]}.
\end{equation}
\begin{lemma}
  \label{lem:osum.z}
  For $z \notin \Sigma_p$, we have $\HS_p^1 = \HSn_p^1 \dplus \mc
  N_p^z$ (topological direct sum). In particular, $\hat \gamma_p^z :=
  \gamma_p \restr {\mc N_p^z}$ is a topological isomorphism from $\mc
  N_p^z$ onto $\HSaux^{1/2}$.
\end{lemma}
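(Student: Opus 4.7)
The plan is to prove everything for $p=0$; the case $p=1$ then follows by a completely parallel argument in which $\laplD[0]$ is replaced by $\laplN[1]$, $\de$ by $\ded$, and $\hat\gamma_0$ by the unitary map $\hat\gamma_1$ of \Lem{g1.ker}. Both assertions — the algebraic direct sum and the topological isomorphism — come down to showing that $\hat\gamma_0^z \colon \mc N_0^z \to \HSaux^{1/2}$ is a bounded bijection, because once that is known, any $f_0 \in \HS_0^1$ can be split as $f_0 = (f_0 - h_0) + h_0$ with $h_0 := (\hat\gamma_0^z)^{-1}\gamma_0 f_0 \in \mc N_0^z$ and $f_0 - h_0 \in \ker\gamma_0 = \HSn_0^1$.

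For injectivity, I would argue $\HSn_0^1 \cap \mc N_0^z = \{0\}$ by identifying the Dirichlet Laplacian. If $h \in \mc N_0^z \cap \HSn_0^1$, then $h \in \dom \de_0$ with $\de_0 h = \de h \in \dom \ded = \dom \de_0^*$, so $h \in \dom \laplD[0]$ and $\laplD[0] h = \ded \de h = \Delta_0 h = zh$; the hypothesis $z \notin \Sigma_0 = \spec{\laplD[0]}$ forces $h = 0$.

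For surjectivity I would construct the general-$z$ Dirichlet solution perturbatively from the $z=-1$ one furnished by \Lem{g.inv}. Given $\phi \in \HSaux^{1/2}$, set $h^{(-1)} := (\hat\gamma_0)^{-1}\phi \in \mc N_0$ and put
\[
  h^z := h^{(-1)} + (z+1)(\laplD[0] - z)^{-1} h^{(-1)}.
\]
Since $z \notin \Sigma_0$, the correction $w := (z+1)(\laplD[0] - z)^{-1} h^{(-1)}$ lies in $\dom \laplD[0] \subset \HSn_0^1 \cap \HS_0^2$, so $\gamma_0 w = 0$ and hence $\gamma_0 h^z = \phi$. Using $(\Delta_0 + 1) h^{(-1)} = 0$ on the one hand and $(\laplD[0] - z)w = (z+1) h^{(-1)}$ on the other, a short computation gives $(\Delta_0 - z) h^z = -(z+1) h^{(-1)} + (z+1) h^{(-1)} = 0$, so $h^z \in \mc N_0^z$ as required.

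Finally, for the topological upgrade I would apply the open mapping theorem. The key observation is that on $\mc N_p^z$ the $\HS_p^1$- and $\HS_p^2$-norms are equivalent, since $\Delta_p f = zf$ there gives $\normsqr[\HS_p^2]{f} \le (1+|z|^2)\normsqr[\HS_p^1]{f}$; consequently $\mc N_p^z$ is closed in $\HS_p^1$. Together with the closedness of $\HSn_p^1 = \ker\gamma_p$, the continuous algebraic bijection $\HSn_p^1 \oplus \mc N_p^z \to \HS_p^1$ becomes a topological isomorphism, and the bounded bijection $\hat\gamma_p^z \colon \mc N_p^z \to \HSaux^{1/2}$ (bounded by \Lem{bd.g12}, with $\HSaux^{1/2}$ complete because $\hat\gamma_0$ is by definition an isometry from the Hilbert space $\mc N_0$) is likewise a topological isomorphism. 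I expect the main obstacle to be pinning down the perturbative formula for $h^z$ and checking that its correction lies in $\HSn_0^1$ so that the Dirichlet trace is preserved; this is where it is crucial to use the specific self-adjoint extension $\laplD[0]$ (respectively $\laplN[1]$) rather than any other restriction of $\Delta_p$.
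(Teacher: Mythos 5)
Your argument is correct, but it reaches the conclusion by a genuinely different route than the paper. The paper's proof builds a bounded projection $P_0^z = 1 - \iota_0(\laplD[0]-z)^{-1}(\lapl[0]-z)$ on all of $\HS_0^1$ --- which requires reading $\lapl[0]$ as a map $\HS_0^1 \to \HSn_0^{-1}$ in the scale of spaces of diagram~\eqref{eq:sub.cov} --- checks $(1-P_0^z)^2=1-P_0^z$, identifies $\ran P_0^z \subset \mc N_0^z$ and $\ran(1-P_0^z)\subset \HSn_0^1$, and obtains the topological direct sum immediately from the boundedness of these projections; the bijectivity of $\hat\gamma_p^z$ is then read off at the very end. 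You reverse the logic: injectivity comes from recognizing $\HSn_0^1 \cap \mc N_0^z$ as an eigenspace of $\laplD[0]$, surjectivity from the corrector $h^z = h^{(-1)} + (z+1)(\laplD[0]-z)^{-1}h^{(-1)}$ --- which is precisely the paper's $P_0^z$ applied to the $z=-1$ Dirichlet solution, except that since $h^{(-1)}\in\mc N_0\subset\HS_0^2$ you never need the negative-order spaces --- and the topological statement from closedness of $\mc N_p^z$ in $\HS_p^1$ (via the norm equivalence on $\mc N_p^z$ and completeness of $\HS_p^2$) together with the open mapping theorem, applied once to the sum map and once to $\hat\gamma_p^z$. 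Your version is more elementary in that it avoids the dual-space extension of $\lapl[p]$, at the price of replacing the paper's explicit bounded projections by two appeals to the open mapping theorem; both arguments correctly hinge on the same point you flag at the end, namely that $\dom\laplD[0]\subset\ker\gamma_0$ and $\dom\laplN[1]\subset\ker\gamma_1$, so the resolvent correction does not disturb the relevant trace.
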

\begin{proof}
  For $z \notin \spec {\laplD[0]}$, we define
  \begin{equation*}
    \map{P_0^z :=  1 - \iota_0 (\laplD[0]-z)^{-1}  (\lapl[0] - z)}
       {\HS_0^1} {\HS_0^1}
  \end{equation*}
  where
  \begin{align*}
    \map{\lapl[0]=\ded \de}{\HS_0^1}{\HSn_0^{-1}}, \qquad
    \map{(\laplD[0]-z)^{-1}=(\ded \de_0-z)^{-1}}{\HSn_0^{-1}}{\HSn_0^1}
  \end{align*}
  and $\embmap{\iota_0}{\HSn_0^1}{\HS_0^1}$. A simple calculation
  shows that $(1-P_0^z)^2 = (1-P_0^z)$, i.e., $1-P_0^z$ and therefore
  $P_0^z$ are projections. Furthermore, $f_0=P_0^z f_0$ is equivalent
  to $\lapl[0] f_0= z f_0$. In order to show that $f_0 = P_0^z f_0 \in
  \mc N_0^z$ let us first show that $f_0 \in \HS_0^2$, i.e., that
  $h_1:=\de f_0 \in \HS_1^1 = \dom \ded$. To this end, recall the
  definition of the domain $\dom \ded=\dom \de_0^*$
  in~\eqref{eq:def.adj}. We have here
  \begin{equation*}
    \iprod {\de f_0}{\de_0 g_0}
    = \iprod {\ded \de f_0}{g_0}
    = \iprod {z f_0}{g_0}
  \end{equation*}
  by \Lem{green} (note that $\gamma_0 g_0=0$) and the fact that $\ded
  \de f_0 = z f_0$; we can choose $h_0=z f_0$ and therefore $f_0 \in
  \HS_0^2$. A straightforward calculation shows now that
  $(\lapl[0]-z)f_0=0$, and finally, $f_0 \in \mc N_0^z$.

  By the definition of $P_0^z$, it is also clear that $\ran (1-P_0^z)
  \subset \HSn_0^1$, and therefore $\HS_0^1$ splits into the direct
  sum. The direct sum is also a topological sum, since $1-P_0^z$ and
  $P_0^z$ are bounded maps. Therefore $f_0 \mapsto ((1-P_0^z)f_0,
  P_0^z f_0)$ is a bounded bijection, and also a topological
  isomorphism.  The argument for $1$-forms is similar, using
  \begin{equation*}
    \map{P_1^z :=  1 - \iota_1 (\laplN[1]-z)^{-1}  (\lapl[1] - z)}
       {\HS_1^1} {\HS_1^1}
  \end{equation*}
  where
  \begin{align*}
    \map{\lapl[1]=\de \ded}{\HS_1^1}{\HSn_1^{-1}}, \qquad
    \map{(\laplN[1]-z)^{-1}=(\de \de^*-z)^{-1}}{\HSn_1^{-1}}{\HSn_1^1}
  \end{align*}
  and $\embmap{\iota_1}{\HSn_1^1}{\HS_1^1}$.

  For the last assertion, note that $\ker \gamma_p = \HSn_p^1$ and
  that $\ran \gamma_p = \HSaux^{1/2}$ (see \Lem{g1.ker}); in
  particular, $\hat \gamma_p^z$ is bijective. Furthermore, $\hat
  \gamma_p^z$ is bounded as restriction of the bounded map
  $\map{\gamma_p}{\HS_p^1}{\HSaux^{1/2}}$ (cf.~\Lem{bd.g12}), and
  therefore, $\hat \gamma_p^z$ is a topological isomorphism.
\end{proof}
\begin{lemma}
  \label{lem:d.uni.z}
  For $z \neq 0$, the maps $\map \de {\mc N_0^z} {\mc N_1^z}$
  and $\map \ded {\mc N_1^z} {\mc N_0^z}$ are topological
  isomorphisms.
\end{lemma}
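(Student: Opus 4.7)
The plan is to exploit the defining relations $\ded\de f_0 = zf_0$ and $\de\ded f_1 = zf_1$ to show that $\de$ and $z^{-1}\ded$ are mutually inverse (and analogously for $\ded$ and $z^{-1}\de$), and then verify boundedness in the graph norms of $\HS_p^1$.

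First I would verify that $\de(\mc N_0^z) \subset \mc N_1^z$ and $\ded(\mc N_1^z) \subset \mc N_0^z$. For $f_0 \in \mc N_0^z$ we have $f_0 \in \HS_0^2$, so $\de f_0 \in \dom\ded$. Moreover $\ded\de f_0 = zf_0 \in \HS_0^1 = \dom\de$, so $\de f_0 \in \HS_1^2$, and
\begin{equation*}
  \de\ded(\de f_0) = \de(zf_0) = z\,\de f_0,
\end{equation*}
that is, $\de f_0 \in \mc N_1^z$. The argument for $\ded$ is symmetric using $f_1 \in \HS_1^2$ and $\de\ded f_1 = zf_1 \in \HS_1^1 = \dom\ded$.

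Next, the relations $\ded\de = z\cdot\id$ on $\mc N_0^z$ and $\de\ded = z\cdot\id$ on $\mc N_1^z$ immediately give, since $z \neq 0$, that
\begin{equation*}
  (\de\restr{\mc N_0^z})^{-1} = z^{-1}\,\ded\restr{\mc N_1^z}
  \und
  (\ded\restr{\mc N_1^z})^{-1} = z^{-1}\,\de\restr{\mc N_0^z}.
\end{equation*}
Hence both maps are bijections between $\mc N_0^z$ and $\mc N_1^z$.

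Finally I would check that $\de$ and its inverse are bounded with respect to the graph norms of $\HS_0^1$ and $\HS_1^1$. For $f_0 \in \mc N_0^z$,
\begin{equation*}
  \normsqr[\HS_1^1]{\de f_0} = \normsqr{\de f_0} + \normsqr{\ded\de f_0}
  = \normsqr{\de f_0} + |z|^2 \normsqr{f_0}
  \le \max(1,|z|^2)\,\normsqr[\HS_0^1]{f_0},
\end{equation*}
and symmetrically
\begin{equation*}
  \normsqr[\HS_0^1]{z^{-1}\ded f_1} = |z|^{-2}\bigl(\normsqr{\ded f_1} + |z|^2\normsqr{f_1}\bigr)
  \le \max(|z|^{-2},1)\,\normsqr[\HS_1^1]{f_1}
\end{equation*}
for $f_1 \in \mc N_1^z$, which shows $\de\colon \mc N_0^z \to \mc N_1^z$ is a topological isomorphism. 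The corresponding statement for $\ded$ follows in exactly the same way, or by inversion.

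I do not expect any real obstacle here: the only subtlety is recognising that since $\mc N_0^z \subset \HS_0^2$ (as established in the course of the argument), the equation $\ded\de f_0 = zf_0$ can legitimately be used to express $\de^{-1}$ as a multiple of $\ded$ on the range $\mc N_1^z$, which simultaneously yields bijectivity and the explicit bounded inverse.
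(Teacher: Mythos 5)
Your proposal is correct and follows essentially the same route as the paper: use $\ded\de=z$ on $\mc N_0^z$ and $\de\ded=z$ on $\mc N_1^z$ to identify $z^{-1}\ded$ as the inverse of $\de$, then bound the graph norms. Your extra care in verifying that $\de f_0\in\HS_1^2$ (via $\ded\de f_0=zf_0\in\dom\de$) and in writing out the boundedness of the inverse explicitly is a slight refinement of the paper's argument, not a different one.
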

\begin{proof}
  If $f_0 \in \mc N_0^z$ then $\de \ded \de f_0 = z \de f_0$, i.e,
  $\de f_0 \in \mc N_1^z$. Similarly, $f_1 \in\mc N_1^z$ implies $\ded
  f_1 \in \mc N_0^z$. Furthermore, $\frac 1 z \ded \de f_0 = f_0$ and
  $\de (\frac 1 z \ded f_1) = f_1$ implies that $\frac 1 z \ded$ is
  the inverse of $\de$. Finally, $\de$ is bounded on $\mc N_0^z$,
  since
  \begin{equation*}
    \normsqr[\HS_1^1]{\de f_0} 
    = \normsqr[\HS_1]{\de f_0} + \normsqr[\HS_0]{\ded \de f_0}
    = \normsqr[\HS_1]{\de f_0} + |z|^2 \normsqr[\HS_0]{f_0}
    \le (1+ |z|^2)\normsqr[\HS_0^1]{f_0}
  \end{equation*}
  and therefore a topological isomorphism. The assertion for $\ded$
  follows similarly.
\end{proof}

\begin{definition}
  \label{def:krein.g}
  We call $z \mapsto \beta_0^z:=(\hat \gamma_0^z)^{-1}$, $z \notin
  \Sigma_0$ the \emph{Dirichlet solution map} or the \emph{Krein
    $\Gamma$-field of order $0$} associated to the first order
  boundary triple $(\HS,\HSaux, \gamma_0)$. Similarly, we call $z
  \mapsto \beta_1^z:=(\hat \gamma_1^z)^{-1}$, $z \notin \Sigma_1$ the
  \emph{Neumann solution map} or the \emph{Krein $\Gamma$-field of
    order $1$}.
\end{definition}

\begin{remark}
  \label{rem.beta}
  \indent
  \begin{enumerate}
  \item We prefer to use the symbol $\beta$ instead of $\gamma$ for
    the Krein $\Gamma$-field in order to avoid confusion with our
    boundary maps $\gamma_p$.

  \item The maps $\map {\beta_p^z}{\HSaux^{1/2}}{\mc N_p^z \subset
      \HS_p^1}$ are topological isomorphisms, since the inverses $\hat
    \gamma_p^z$ are.
  \item The names ``Dirichlet/Neumann solution map'' are due to the
    following fact: The $p$-form $h_p := \beta_p^z \phi$ is the
    solution of $(\lapl[p]-z)h_p=0$, and $\gamma_p h_p = \phi$. For
    $p=0$, this is the solution of the ``Dirichlet problem''
    ($\gamma_0 h_0$ prescribed), and for $p=1$, the solution of the
    ``Neumann problem'' ($\gamma_1 h_1$ prescribed). We will see in
    \Lem{krein.g3} that the Krein $\Gamma$-fields are related to a
    Krein $\Gamma$-field in the sense of an ordinary boundary triple.
  \item
    \label{adj}
    The map $\map {\beta_0^z}{\HSaux^{1/2}} {\HS_0^1}$ regarded as
    an operator $\map {\beta_0^z}{\HSaux^{1/2}} {\HS_0}$ into $\HS_0$ is
    bounded, as well as its adjoint, denoted by $\map{(\beta_0^{\conj
        z})^*}{\HS_0} {\HSaux^{1/2}}$.
  \end{enumerate}
\end{remark}

\begin{lemma}
  \label{lem:beta.adj}
  We have $\gamma_1 \de f_0 = (\beta_0^{\conj z})^* (\lapl[0]-z) f_0$
  for $f_0 \in \dom \laplD[0] = \HS_0^2 \cap \HSn_0^1$ where
  $(\beta_0^{\conj z})^*$ is the adjoint of $\beta_0^{\conj z}$ as
  operator $\map {\beta_0^z}{\HSaux^{1/2}} {\HS_0}$. Furthermore,
  $\ran(\beta_0^{\conj z})^*=\HSaux^{1/2}$.
\end{lemma}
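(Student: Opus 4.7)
The plan is to derive the identity by plugging $h_0 := \beta_0^{\conj z}\phi$ into the abstract Green's formula (\Cor{green}) for an arbitrary boundary value $\phi \in \HSaux^{1/2}$, and then to bootstrap the range assertion from this identity using the bijectivity of $\laplD[0]-z$.

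First, fix $\phi \in \HSaux^{1/2}$ and set $h_0 := \beta_0^{\conj z}\phi$. By \Def{krein.g} and \Lem{osum.z}, the element $h_0$ lies in $\mc N_0^{\conj z} \subset \HS_0^2$ with $\gamma_0 h_0 = \phi$ and $\lapl[0] h_0 = \conj z h_0$. Since $f_0 \in \dom \laplD[0] \subset \HSn_0^1 = \ker \gamma_0$, we have $\gamma_0 f_0 = 0$, so that \Cor{green} applied to the pair $(h_0, f_0)$ reduces to
\begin{equation*}
  \iprod{\lapl[0] h_0}{f_0} - \iprod{h_0}{\lapl[0] f_0}
  = \iprod[\HSaux^{1/2}]{\phi}{\gamma_1 \de f_0}.
\end{equation*}
Substituting $\lapl[0] h_0 = \conj z h_0$ and using sesquilinearity, the left-hand side collapses (up to the sign inherited from the convention $\gamma_1 = -\gamma_0 \ded P_1$) to $\iprod{\beta_0^{\conj z}\phi}{(\lapl[0]-z) f_0}$, which by the defining property of the adjoint $(\beta_0^{\conj z})^*$ of $\beta_0^{\conj z}\colon \HSaux^{1/2}\to\HS_0$ from Remark~\ref{rem.beta} equals $\iprod[\HSaux^{1/2}]{\phi}{(\beta_0^{\conj z})^*(\lapl[0]-z) f_0}$. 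Since $\phi$ is arbitrary in $\HSaux^{1/2}$ and the inner product there is non-degenerate, the identity $\gamma_1 \de f_0 = (\beta_0^{\conj z})^*(\lapl[0]-z) f_0$ follows.

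For the range assertion, the identity just proved factors $(\beta_0^{\conj z})^*$ on $\HS_0$ as $\gamma_1 \de \circ (\laplD[0]-z)^{-1}$, and since $\laplD[0]-z\colon \dom \laplD[0] \to \HS_0$ is a bijection for $z \notin \Sigma_0$, this gives $\ran (\beta_0^{\conj z})^* = \gamma_1 \de (\dom \laplD[0])$; the inclusion into $\HSaux^{1/2}$ is clear from \Lem{bd.g12}. The reverse inclusion — producing, for any $\psi \in \HSaux^{1/2}$, an $f_0 \in \dom \laplD[0]$ with $\gamma_1 \de f_0 = \psi$ — is the main obstacle. The plan here is to first construct $g_0 \in \mc N_0^{z'} \subset \HS_0^2$ satisfying $\gamma_1 \de g_0 = \psi$, for some $z' \ne 0$ in the common resolvent set of $\laplD[0]$ and $\laplN[1]$, by composing the topological isomorphisms $(\hat\gamma_1^{z'})^{-1} \colon \HSaux^{1/2} \to \mc N_1^{z'}$ and $\frac{1}{z'}\ded \colon \mc N_1^{z'} \to \mc N_0^{z'}$ provided by \LemS{osum.z}{d.uni.z}; and then to convert $g_0$ into an element of $\dom \laplD[0]$ via the topological direct-sum decomposition $\HS_0^2 = \dom \laplD[0] \dplus \mc N_0^{z''}$ (a second application of \Lem{osum.z} at $z'' \ne z'$), carefully tracking how the correction term modifies $\gamma_1 \de$ and absorbing the discrepancy back into $\ran(\beta_0^{\conj z})^*$ via the identity already established.
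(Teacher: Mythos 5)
Your proof of the identity is essentially the paper's proof: both arguments apply \Cor{green} to the pair consisting of the Dirichlet solution $h_0=\beta_0^{\conj z}\phi$ and $f_0\in\dom\laplD[0]$, use $\gamma_0 f_0=0$ to kill one boundary term and the eigenvalue equation to kill the bulk term, and read off the adjoint relation. One caveat: attributing the residual sign to ``the convention $\gamma_1=-\gamma_0\ded P_1$'' is not an argument --- the sign lives entirely in the interplay between $\lapl[0]h_0=\conj z h_0$ and the (anti)linearity slots of the inner product, and should be tracked explicitly (the paper itself is loose about $z$ versus $\conj z$ here, so this is a presentational defect rather than a fatal one).

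The genuine gap is in the range assertion, where you diverge from the paper completely. Your reduction $\ran(\beta_0^{\conj z})^*=\gamma_1\de(\dom\laplD[0])$ is correct, and so is the construction, via \Lems{osum.z}{d.uni.z}, of $g_0\in\mc N_0^{z'}$ with $\gamma_1\de g_0=\psi$. But the last step does not close: writing $g_0=f_0\dplus u_0$ with $f_0\in\dom\laplD[0]$ and $u_0\in\mc N_0^{z''}$ gives $\gamma_1\de f_0=\psi-\gamma_1\de u_0=\psi-Q_0^{z''}\gamma_0 g_0$, so you have only shown that $\psi$ lies in $\ran(\beta_0^{\conj z})^*$ \emph{modulo} the term $\gamma_1\de u_0$ --- which is again $\gamma_1\de$ applied to an element of $\mc N_0^{z''}$ rather than of $\dom\laplD[0]$, i.e.\ exactly the problem you started with; iterating the correction does not terminate. ``Absorbing the discrepancy back into the range via the identity already established'' is circular: the identity describes the action of $(\beta_0^{\conj z})^*$ on $\HS_0$, it does not certify that a given boundary element is attained. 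The paper's argument for this part is entirely different and one line long: it invokes $\ran(\beta_0^{\conj z})^*=(\ker\beta_0^{\conj z})^\orth$ together with the injectivity of $\map{\beta_0^{\conj z}}{\HSaux^{1/2}}{\HS_0}$. (Be aware that even that argument silently uses a closed-range hypothesis --- in general one only gets $\clo{\ran(\beta_0^{\conj z})^*}=(\ker\beta_0^{\conj z})^\orth$ --- so if you wish to repair your proof, the missing ingredient is a density/closed-range statement of this operator-theoretic kind, not a pointwise preimage construction.)
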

\begin{proof}
  The assertion follows from (see
  also~\cite[Thm.~1.23~(2d)]{bgp:pre06})
  \begin{multline*}
    \iprod[\HSaux^{1/2}] \phi {(\beta_0^{\conj z})^*(\lapl[0]-z) f_0}
    = \iprod[\HS] {\beta_0^z \phi} {(\lapl[0]-z) f_0}\\
    = \iprod[\HS] {(\lapl[0]-z) \beta_0^z \phi} { f_0}
    + \iprod[\HSaux^{1/2}] {\gamma_0 \beta_0^z \phi} {\gamma_1 \de f_0}
    - \iprod[\HSaux^{1/2}] {\gamma_1 \de \beta_0^z \phi} {\gamma_0
         f_0}\\
    =\iprod[\HSaux^{1/2}] \phi {\gamma_1 \de f_0}
  \end{multline*}
  by \Cor{green} for the second equality. As far as the third equality
  is concerned, note that the first term vanishes since $\beta_0^z
  \phi$ solves the eigenvalue equation; the same holds for the third
  term since $\gamma_0 f_0=0$ for $f_0 \in \HSn_0^1$. For the second
  term, we have $\gamma_0 \beta_0^z \phi = \phi$ by the definition of
  $\beta_0^z$. The last assertion follows from $\ran(\beta_0^{\conj
    z})^*=(\ker \beta_0^{\conj z})^\orth$ and from the fact that
  $\map{\beta_0^{\conj z}}{\HSaux^{1/2}}{\HS_0}$ is injective.
\end{proof}

We can now define the Dirichlet-to-Neumann map and a closely related
map for arbitrary resolvent values~$z$:
\begin{definition}
  \label{def:dn.z}
  The \emph{Krein Q-function} associated to the first order boundary
  triple $(\HS, \HSaux, \gamma_0)$ is the map
  \begin{equation*}
    z \mapsto Q_0^z 
      := \gamma_1 \de (\hat \gamma_0^z)^{-1}=\gamma_1 \de \beta_0^z, 
          \qquad z \notin \Sigma_0=\spec{\laplD[0]}.
  \end{equation*}
  For $z \notin \Sigma_0$, the \emph{abstract Dirichlet-to-Neumann map
    at $z$} is defined by
  \begin{equation*}
    \map{\Lambda(z) 
    := \Lambda Q_0^z 
     = \Lambda \gamma_1 \de \beta_0^z
     = \wt \gamma_1 \de \beta_0^z}{\HSaux^{1/2}}{\HSaux^{-1/2}}.
\end{equation*}
\end{definition}

\pagebreak
\begin{remark}
  \indent
  \begin{enumerate}
  \item We shall see in \Sec{bd.triple} that $Q_0^z$ is indeed a Krein
    Q-function for an ordinary boundary triple. Note that
    $\map{Q_0^z}{\HSaux^{1/2}}{\HSaux^{1/2}}$ is a bounded map
    (cf.~\LemS{bd.g12}{d.uni.z}). In addition, we have
    \begin{equation*}
      Q_0^{-1} 
      = \gamma_1 \de (\hat \gamma_0)^{-1}
      = - \gamma_0 \ded P_1 \de (\hat \gamma_0)^{-1}
      = \gamma_0 (\hat \gamma_0)^{-1} = \id_{\HSaux^{1/2}}
    \end{equation*}
    at $z=-1$.
  \item Note that $\Lambda(z)$ is indeed the Dirichlet-to-Neumann map:
    We solve the Dirichlet problem $h_0 = \beta_0^z \phi$, i.e,
    \begin{equation*}
      \lapl[0] h_0 = z h_0, \qquad \gamma_0 h_0 = \phi;
    \end{equation*}
    and the Dirichlet-to-Neumann map is the ``normal derivative at the
    boundary'' of $h_0$ (cf.~\Rem{dn}), i.e., $\Lambda(z) \phi = \wt
    \gamma_1 \de h_0$.
  \end{enumerate}
\end{remark}

Let us now define self-adjoint restrictions of $\lapl[0]$.
\begin{definition}
  \label{def:lapl.sa}
  Let  $B$ be a bounded operator in $\HSaux^{1/2}$. We set
  \begin{gather*}
    \dom \laplX[0]B 
    := \set {f_0 \in \HS_0^2}
            {\gamma_1 \de f_0 = B \gamma_0 f_0}\\
    \dom \laplX[1]B 
    := \set {f_1 \in \HS_1^2}
            {\gamma_1 f_1 = B \gamma_0 \ded f_1}
  \end{gather*}
  and denote by $\laplX[p]B$ the restriction of $\lapl[p]$ onto $\dom
  \laplX[p]B$.
\end{definition}

\begin{lemma}
  \label{lem:lapl.sa}
  Assume that $\dom (\laplX[0]B)^* \subset \HS_0^1$, then the operator
  $\laplX[0]B$ is self-adjoint iff $B$ is self-adjoint in $\mc
  G^{1/2}$.
\end{lemma}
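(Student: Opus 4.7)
\textit{Plan.} The proof rests on applying the abstract Green's formula (\Cor{green}) in both directions and exploiting the regularity hypothesis $\dom(\laplX[0]B)^*\subset\HS_0^1$ to carry boundary data over to the adjoint. The symmetric inclusion $\laplX[0]B\subset(\laplX[0]B)^*$ under $B=B^*$ is immediate from \Cor{green}: for $f_0,g_0\in\dom\laplX[0]B$ the right-hand side reads $\iprod[\HSaux^{1/2}]{\gamma_0 f_0}{B\gamma_0 g_0}-\iprod[\HSaux^{1/2}]{B\gamma_0 f_0}{\gamma_0 g_0}$, which vanishes by self-adjointness of $B$ in $\HSaux^{1/2}$.

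The crux is the reverse inclusion $\dom(\laplX[0]B)^*\subset\dom\laplX[0]B$. Fix $g_0\in\dom(\laplX[0]B)^*$, so $g_0\in\HS_0^1$ by hypothesis. I would first \emph{upgrade regularity} to $g_0\in\HS_0^2$ by testing the adjoint identity against $f_0$ in the common symmetric restriction $\dom\laplD[0]\cap\dom\laplN[0]\subset\dom\laplX[0]B$ (on which both $\gamma_0 f_0$ and $\gamma_1\de f_0$ vanish); applying \Lem{green} to the pair $g_0\in\HS_0^1$, $\de f_0\in\HS_1^1$ collapses the identity to
\begin{equation*}
  \iprod{\de_0 f_0}{\de g_0}=\iprod{f_0}{(\laplX[0]B)^* g_0}.
\end{equation*}
Granting that $\dom\laplD[0]\cap\dom\laplN[0]$ is a core for $\de_0$ (which holds in the motivating manifold example), the identity extends by continuity to all $f_0\in\HSn_0^1=\dom\de_0$, so $\de g_0\in\dom\de_0^*=\HS_1^1$; hence $g_0\in\HS_0^2$ with $\lapl[0] g_0=(\laplX[0]B)^* g_0$. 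With both $f_0,g_0\in\HS_0^2$, \Cor{green} applies directly and yields, for every $f_0\in\dom\laplX[0]B$,
\begin{equation*}
  0=\iprod[\HSaux^{1/2}]{\gamma_0 f_0}{\gamma_1\de g_0-B\gamma_0 g_0},
\end{equation*}
using $B=B^*$. Density of $\gamma_0(\dom\laplX[0]B)$ in $\HSaux^{1/2}$ then forces $\gamma_1\de g_0=B\gamma_0 g_0$, placing $g_0\in\dom\laplX[0]B$.

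The converse implication ($\laplX[0]B$ self-adjoint $\Rightarrow$ $B$ self-adjoint in $\HSaux^{1/2}$) runs through the same Green computation: symmetry of $\laplX[0]B$ yields $\iprod[\HSaux^{1/2}]{B\gamma_0 f_0}{\gamma_0 g_0}=\iprod[\HSaux^{1/2}]{\gamma_0 f_0}{B\gamma_0 g_0}$ for $f_0,g_0\in\dom\laplX[0]B$, so $B$ is symmetric on $\gamma_0(\dom\laplX[0]B)$; the same density and boundedness of $B$ promote this to symmetry on all of $\HSaux^{1/2}$, hence self-adjointness. \emph{Main obstacle.} The delicate point is the density of $\gamma_0(\dom\laplX[0]B)$ in $\HSaux^{1/2}$. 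Using the splitting $\HS_0^2=\dom\laplD[0]\dplus\mc N_0$ from \Lem{osum.z} at $z=-1$, a $\phi\in\HSaux^{1/2}$ lies in $\gamma_0(\dom\laplX[0]B)$ precisely when $(B-\id)\phi$ belongs to the range of $\gamma_1\de$ restricted to $\dom\laplD[0]$; one first argues that this range is dense in $\HSaux^{1/2}$ (it coincides with $((\hat\gamma_0)^{-1})^*(\HS_0)$, dense because $(\hat\gamma_0)^{-1}$ is injective) and then lifts the density through the bounded self-adjoint operator $B-\id$ using the orthogonal decomposition $\HSaux^{1/2}=\ker(B-\id)\oplus\clo{\ran(B-\id)}$.
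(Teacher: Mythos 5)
Your proposal follows the same route as the paper's own proof: reduce everything to the boundary terms in \Cor{green} and to symmetry of $B$ on $\gamma_0(\dom\laplX[0]B)\subset\HSaux^{1/2}$. The two places where you go beyond the paper are precisely the two places the paper glosses over, so let me comment on both. The genuine issue is the regularity upgrade. You correctly see that before \Cor{green} can be applied to $g_0\in\dom(\laplX[0]B)^*$ one must pass from $g_0\in\HS_0^1$ (the hypothesis) to $g_0\in\HS_0^2$, i.e.\ $\de g_0\in\dom\ded$, and your mechanism (test against $f_0\in\dom\laplD[0]\cap\dom\laplN[0]$, where both boundary data vanish, then extend by a core argument) is the natural one. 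But the core property of $\dom\laplD[0]\cap\dom\laplN[0]$ for $\de_0$ is an \emph{additional hypothesis}: it is not among the assumptions of the lemma and need not hold in the abstract setting (that intersection could a priori even fail to be dense in $\HS_0$). You flag this honestly, and in fairness the paper's proof simply asserts ``in particular $f_0,g_0\in\HS_0^2$'' at the corresponding point without argument; still, as a proof of the lemma \emph{as stated} this is the one real gap, and it should either be added as a hypothesis or absorbed into a strengthened reading of the assumption on $\dom(\laplX[0]B)^*$.

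Your second addition, the density of $\gamma_0(\dom\laplX[0]B)$ in $\HSaux^{1/2}$, is correct but can be closed more cleanly, with surjectivity instead of density. Decomposing $f_0=u_0+h_0\in\dom\laplD[0]\dplus\mc N_0$ (which is $\HS_0^2\cap(\HSn_0^1\oplus\mc N_0)$ by \Lem{osum}), the boundary condition $\gamma_1\de f_0=B\gamma_0 f_0$ becomes $\gamma_1\de u_0=(B-\id)\gamma_0 h_0$ because $Q_0^{-1}=\id_{\HSaux^{1/2}}$. By \Lem{beta.adj} at $z=-1$ one has $\gamma_1\de u_0=(\beta_0^{-1})^*(\lapl[0]+1)u_0$, and since $(\lapl[0]+1)$ maps $\dom\laplD[0]$ onto $\HS_0$ and $\ran(\beta_0^{-1})^*=\HSaux^{1/2}$, the map $u_0\mapsto\gamma_1\de u_0$ is onto $\HSaux^{1/2}$. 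Hence $\gamma_0(\dom\laplX[0]B)=\HSaux^{1/2}$ for \emph{any} bounded $B$, which disposes of your ``main obstacle'' entirely and also makes the converse implication ($\laplX[0]B$ self-adjoint $\Rightarrow$ $B$ symmetric, hence self-adjoint by boundedness) immediate.
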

\begin{remark}
  The domain condition does not seem to follow from abstract
  (``soft'') arguments; in our manifold example, it follows from
  elliptic regularity (``hard'' arguments). Note that in general,
  $\dom \laplX[0]{\max}$ defined in~\eqref{eq:lapl.max} is even not a
  subset of $\HS_0^1$ (see \Remenum{lapl}{max} and
  \Rem{lapl.max.mfd}).
\end{remark}
\begin{proof}
  % As in~\cite[Prop.~2.11]{bgp:pre06},
  The graph of the operator $(\laplX[0]B)^*$ is given as
  \begin{multline*}
    \graph (\laplX[0]B)^* =
    \bigset{(f_0, \lapl[0] f_0)}
       {f_0 \in \dom \laplX[0]{\max}, \\
        \forall \, g_0 \in \dom \laplX[0]B \colon
        \iprod {\laplX[0]{\max} f_0} {g_0} =
        \iprod {f_0} {\laplX[0]{\max} g_0}}
    \subset \HS_0^1 \times \HS_0,
  \end{multline*}
  and the latter inclusion holds by our assumption on the domain of
  the adjoint. In particular, $f_0, g_0 \in \HS_0^2$ and we can apply
  \Cor{green}, namely,
  \begin{align*}
    \iprod {\laplX[0]{\max} f_0} {g_0} -
        \iprod {f_0} {\laplX[0]{\max} g_0} 
    &=  \iprod[\HSaux^{1/2}] {\gamma_0 f_0} {\gamma_1 \de g_0}
       -\iprod[\HSaux^{1/2}] {\gamma_1 \de f_0} {\gamma_0 g_0}\\
    &=  \iprod[\HSaux^{1/2}] {\gamma_0 f_0} {B\gamma_0 g_0}
       -\iprod[\HSaux^{1/2}] {B \gamma_0 f_0} {\gamma_0 g_0},
  \end{align*}
  and the latter equality follows from $f_0, g_0 \in \dom \laplX[0]B$.
  The assertion is now obvious.
\end{proof}

The self-adjointness of $B$ in $\HSaux^{1/2}$ can be shown as follows:
\begin{lemma}
  \label{lem:b.12}
  Let $\wt B$ be a bounded and self-adjoint operator on $\HSaux$. In
  this case, $B:= \Lambda^{-1} \wt B$ is bounded and self-adjoint as
  operator on $\HSaux^{1/2}$.
\end{lemma}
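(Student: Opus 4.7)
The plan is to shuttle between the intrinsic inner product of $\HSaux^{1/2}$ and the ambient inner product of $\HSaux$ via the identity
\begin{equation*}
  \iprod[\HSaux^{1/2}] \phi \psi = \iprod[\HSaux] \phi {\Lambda \psi}
  \qquad (\psi \in \dom \Lambda = \HSaux^1),
\end{equation*}
which was implicitly established in \Lem{bd.map} and its proof, and then exploit self-adjointness of $\wt B$ on $\HSaux$ together with the fact that $B$ actually takes values in $\dom \Lambda$.

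First I would check that $B$ maps $\HSaux^{1/2}$ into itself. Since $\wt B$ is bounded on $\HSaux$ and the embedding $\HSaux^{1/2} \hookrightarrow \HSaux$ is continuous (\Lem{bd.map} (i)), $\wt B$ is defined on every $\phi \in \HSaux^{1/2}$, and then $\Lambda^{-1} \wt B \phi \in \ran \Lambda^{-1} = \dom \Lambda = \HSaux^1 \subset \HSaux^{1/2}$. In particular $B \phi$ sits in the ``better'' space $\HSaux^1$, which is exactly what we need for the symmetry computation below.

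For boundedness I would write $\norm[\HSaux^{1/2}]{B\phi}^2 = \iprod[\HSaux]{B\phi}{\Lambda B \phi} = \iprod[\HSaux]{\Lambda^{-1} \wt B \phi}{\wt B \phi}$, and bound this using $\norm[{}]{\Lambda^{-1}} \le \normsqr{\gamma_0}$, boundedness of $\wt B$ on $\HSaux$, and $\norm[\HSaux] \phi \le \norm{\gamma_0} \norm[\HSaux^{1/2}] \phi$. This gives the explicit bound $\norm[{}] B \le \normsqr{\gamma_0} \norm{\wt B}$ on $\HSaux^{1/2}$.

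For self-adjointness, for any $\phi, \psi \in \HSaux^{1/2}$ we have $B \phi, B \psi \in \HSaux^1 = \dom \Lambda$. So using the displayed identity in both directions,
\begin{align*}
  \iprod[\HSaux^{1/2}]{B\phi} \psi
  &= \iprod[\HSaux]{\Lambda B \phi} \psi
  = \iprod[\HSaux]{\wt B \phi} \psi,\\
  \iprod[\HSaux^{1/2}] \phi {B \psi}
  &= \iprod[\HSaux] \phi {\Lambda B \psi}
  = \iprod[\HSaux] \phi {\wt B \psi},
\end{align*}
and the two right-hand sides agree by self-adjointness of $\wt B$ on $\HSaux$. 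Conversely, if $B$ is self-adjoint on $\HSaux^{1/2}$, the same calculation shows $\iprod[\HSaux]{\wt B \phi} \psi = \iprod[\HSaux] \phi {\wt B \psi}$ first for all $\phi, \psi \in \HSaux^{1/2}$ and then, by density of $\HSaux^{1/2}$ in $\HSaux$ and boundedness of $\wt B$, for all $\phi, \psi \in \HSaux$. There is no real obstacle here; the only subtle point is keeping track of which inner product is in use when, and to observe that the range of $B$ lies in $\dom \Lambda$ so that no closure or limiting argument is needed.
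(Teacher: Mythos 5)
Your proof is correct and follows essentially the same route as the paper's: both reduce the $\HSaux^{1/2}$-inner product to the $\HSaux$-inner product via $\Lambda$ and land on the same key identity $\iprod[\HSaux^{1/2}]{B\phi}{\psi} = \iprod[\HSaux]{\wt B \phi}{\psi}$, the only cosmetic difference being that the paper conjugates by $\Lambda^{1/2}$ while you apply the full $\Lambda$ to one slot after observing $\ran B \subset \dom\Lambda$. (The converse direction you include is not needed, since the lemma is not stated as an equivalence.)
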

\begin{proof}
  We have
  \begin{equation*}
    \norm[\Lin {\HSaux^{1/2}}] B
    =\norm[\Lin {\HSaux}] {\Lambda^{1/2}B \Lambda^{-1/2}}
    =\norm[\Lin {\HSaux}] {\Lambda^{-1/2}\wt B \Lambda^{-1/2}}
    \le \norm[\Lin {\HSaux}] {\Lambda^{-1}}
             \norm[\Lin {\HSaux}] {\wt B},
  \end{equation*}
  so that $B$ is bounded on $\HSaux^{1/2}$, and
  \begin{equation*}
    \iprod[\HSaux^{1/2}]{B \phi} \psi
    = \iprod[\HSaux]{\Lambda^{1/2}B \phi} {\Lambda^{1/2}\psi} 
    = \iprod[\HSaux]{\Lambda^{-1/2}\wt B \phi} {\Lambda^{1/2}\psi} 
    = \iprod[\HSaux]{\wt B \phi} \psi 
  \end{equation*}
  and the similar symmetric expression shows the self-adjointness.
\end{proof}

We can now formulate our main result.  For brevity, we restrict
ourselves here to $0$-forms. Similar results hold also for $1$-forms.
\begin{theorem}
  \label{thm:krein}
  Let $B$ be a self-adjoint and bounded operator in $\HSaux^{1/2}$,
  $\laplD[0]$ the self-adjoint Laplacian with Dirichlet boundary
  conditions (cf.~\Def{lapl}) and $\laplX[0] B$ the self-adjoint
  restriction of the Laplacian (cf.\ \Def{lapl.sa}). Assume that $\dom
  (\laplX[0]B)^* \subset \HS_0^1$.
  \begin{enumerate}
  \item 
    \label{kernel} 
    For $z \notin \spec{\laplD[0]}$ we have
    $\ker (\laplX[0] B - z) = \beta_0^z \ker (Q_0^z - B)$.
  \item
    \label{krein}
    For $z \notin \spec{\laplX[0] B} \cup \spec{\laplD[0]}$ we have $0
    \notin \spec{Q_0^z - B}$ and Krein's formula
    \begin{equation*}
        (\laplD[0] - z)^{-1} - (\laplX[0]B - z)^{-1}
        = \beta_0^z (Q_0^z - B)^{-1} (\beta_0^{\conj z})^*
    \end{equation*}
    is valid, where $(\beta_0^{\conj z})^*$ is the adjoint of
    $\beta_0^{\conj z}$ as operator $\map {\beta_0^{\conj
        z}}{\HSaux^{1/2}} {\HS_0}$.
  \item We have
    \begin{equation*}
          \spec {\laplX[0] B} \setminus \spec {\laplD[0]}
          = \set{ z \notin \spec {\laplD[0]}} 
                { 0 \in \spec{Q_0^z - B}}.
    \end{equation*}
  \end{enumerate}
\end{theorem}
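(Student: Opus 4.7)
The plan is to dispatch the three assertions in order, deriving~(\ref{kernel}) and~(\ref{krein}) by direct computation and the third assertion by combining them with a Weyl-sequence argument.

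For part~(\ref{kernel}), I would unwind the definitions. If $f_0 \in \ker(\laplX[0]B - z)$, then $f_0 \in \mc N_0^z$, so by \Lem{osum.z} one may write $f_0 = \beta_0^z \phi$ with $\phi := \gamma_0 f_0 \in \HSaux^{1/2}$; the boundary condition $\gamma_1 \de f_0 = B \gamma_0 f_0$ then becomes $Q_0^z \phi = B \phi$. The converse runs the same identity backwards, starting from $\phi \in \ker(Q_0^z - B)$ and setting $f_0 := \beta_0^z \phi$, which automatically satisfies $(\lapl[0]-z) f_0 = 0$ together with the Robin condition.

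For part~(\ref{krein}), the key observation is that for $f_0 \in \HS_0$ the form
$g_0 := (\laplD[0]-z)^{-1} f_0 - (\laplX[0]B - z)^{-1} f_0$
satisfies $(\lapl[0]-z) g_0 = 0$, hence lies in $\mc N_0^z$ and can be written as $g_0 = \beta_0^z \psi$ with $\psi = -\gamma_0 (\laplX[0]B - z)^{-1} f_0$. Applying $\gamma_1 \de$, using \Lem{beta.adj} to identify $\gamma_1 \de (\laplD[0]-z)^{-1} f_0 = (\beta_0^{\conj z})^* f_0$, and invoking the Robin boundary condition for the second resolvent, I obtain $(Q_0^z - B)\psi = (\beta_0^{\conj z})^* f_0$. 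It remains to show that $Q_0^z - B$ is a bounded bijection of $\HSaux^{1/2}$: injectivity is immediate from part~(\ref{kernel}) since $z \notin \spec{\laplX[0]B}$; surjectivity follows because the identity just derived, as $f_0$ ranges over $\HS_0$, produces preimages of every element of $\ran(\beta_0^{\conj z})^* = \HSaux^{1/2}$ (\Lem{beta.adj}); boundedness of $(Q_0^z - B)^{-1}$ is then automatic by the open mapping theorem. Krein's formula is obtained by substituting $\psi = (Q_0^z - B)^{-1}(\beta_0^{\conj z})^* f_0$ into $g_0 = \beta_0^z \psi$.

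For the third assertion, the inclusion ``$\supseteq$'' is the contrapositive of part~(\ref{krein}). The main obstacle is the reverse inclusion: given $z \in \spec{\laplX[0]B} \setminus \spec{\laplD[0]}$, self-adjointness of $\laplX[0]B$ yields a Weyl sequence $f_n \in \dom \laplX[0]B$ with $\|f_n\|=1$ and $(\laplX[0]B - z)f_n \to 0$ in $\HS_0$. Setting $\phi_n := \gamma_0 f_n$ and repeating the computation of part~(\ref{krein}) with $f_n$ in place of a resolvent gives $(Q_0^z - B)\phi_n = -(\beta_0^{\conj z})^*(\lapl[0]-z) f_n \to 0$ in $\HSaux^{1/2}$. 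To produce a Weyl sequence for $Q_0^z - B$ I must rule out $\phi_n \to 0$: in that case, decomposing $f_n = k_n + \beta_0^z \phi_n$ according to \Lem{osum.z} (with $k_n \in \dom \laplD[0]$), continuity of $\beta_0^z : \HSaux^{1/2} \to \HS_0$ would force $\|k_n\| \to 1$, while $(\laplD[0] - z)k_n = (\lapl[0]-z) f_n \to 0$, contradicting $z \notin \spec{\laplD[0]}$. Passing to a subsequence on which $\|\phi_n\|_{\HSaux^{1/2}}$ is bounded below then yields $0 \in \spec{Q_0^z - B}$.
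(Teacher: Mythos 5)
Your proof is correct. In parts~(\ref{kernel}) and~(\ref{krein}) it is essentially the paper's argument: where the paper decomposes $(\laplX[0]B-z)^{-1}h_0$ along the topological direct sum of \Lem{osum.z}, you note directly that the difference of the two resolvents applied to $h_0$ lies in $\mc N_0^z$; after that, both proofs apply $\gamma_1\de$, use \Lem{beta.adj} twice (the identity $\gamma_1\de=(\beta_0^{\conj z})^*(\lapl[0]-z)$ on $\dom\laplD[0]$, and $\ran(\beta_0^{\conj z})^*=\HSaux^{1/2}$ for surjectivity), quote part~(\ref{kernel}) for injectivity, and read off Krein's formula. The genuine difference is the third assertion. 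The paper disposes of it with ``a consequence of~(\ref{krein})'', which literally yields only the inclusion ``$\supseteq$'' (the contrapositive of~(\ref{krein})); the inclusion ``$\subseteq$'' needs the converse implication, usually obtained by verifying that the right-hand side of Krein's formula is a bounded two-sided inverse of $\laplX[0]B-z$ whenever $0\notin\spec{Q_0^z-B}$. You prove ``$\subseteq$'' spectrally instead: self-adjointness of $\laplX[0]B$ (\Lem{lapl.sa}) supplies approximate eigenvectors $f_n$, the identity $(Q_0^z-B)\gamma_0 f=-(\beta_0^{\conj z})^*(\lapl[0]-z)f$, valid for all $f\in\dom\laplX[0]B$ by the same computation as in~(\ref{krein}), transports them to the boundary (using that $(\beta_0^{\conj z})^*$ is bounded from $\HS_0$ into $\HSaux^{1/2}$, Remark~\ref{rem.beta}), and $z\notin\spec{\laplD[0]}$ correctly rules out $\gamma_0 f_n\to 0$ through the decomposition $f_n=k_n+\beta_0^z\gamma_0 f_n$ with $k_n\in\dom\laplD[0]$. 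This buys a self-contained proof of the only nontrivial inclusion in the third assertion, at the cost of invoking the Weyl-sequence characterisation of the spectrum of a self-adjoint operator, whereas the resolvent-formula route stays purely algebraic. A small additional merit: your bookkeeping gives $(Q_0^z-B)\gamma_0 f_0=-(\beta_0^{\conj z})^*h_0$, whose sign is the one consistent with Krein's formula as stated, while the paper's~\eqref{eq:krein.calc2} drops this minus sign.
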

\begin{proof}
  The proof is again closely related to the proof for ordinary
  boundary triples (cf.~\cite[Thm.~1.29]{bgp:pre06}). For the first
  assertion, take $\phi \in \ker(Q_0^z - B)$ and set $f_0 = \beta_0^z
  \phi$. By the definition of the solution map $\beta_0^z$, we have
  $(\lapl[0] - z) f_0 = 0$ and $\gamma_0 f_0 = \phi$. Furthermore,
  $Q_0^z \phi = B \phi$ is equivalent to $\gamma_1 \de f_0 = B
  \gamma_0 f_0$ by the definition of $Q_0^z$. However, the last
  equation shows that $f_0 \in \dom \laplX[0] B$, i.e., $f_0 \in \ker
  (\laplX[0] B - z)$. The opposite inclusion follows similarly.

  To prove the second assertion, take $h_0 \in \HS_0$ and $f_0 :=
  (\laplX[0] B - z)^{-1} h_0 \in \dom \laplX[0] B$. By \Lem{osum.z} we
  can decompose $f_0 = f_0^z \dplus g_0^z \in \HSn_0^1 \dplus \mc
  N_0^z$. Since $f_0, g_0^z \in \HS_0^2$ we also have $f_0^z \in
  \HS_0^2$ and
  \begin{equation*}
    h_0 = (\laplX[0] B - z) f_0
    = (\lapl[0] - z) f_0
    = (\lapl[0] - z) f_0^z
    = (\laplD[0] - z) f_0^z,
  \end{equation*}
  i.e., $f_0^z = (\laplD[0] - z)^{-1} h_0$.
  Furthermore, $\gamma_0 f_0^z = 0$, therefore $\gamma_0 f_0 =
  \gamma_0 g_0^z$, i.e., $g_0^z = \beta_0^z \gamma_0 f_0$ and we have
  \begin{equation}
    \label{eq:krein.calc}
    (\laplX[0] B - z)^{-1} h_0
    = f_0 = f_0^z + g_0^z
    = (\laplD[0] - z)^{-1} h_0 + \beta_0^z \gamma_0 f_0.
  \end{equation}
  Now we apply $\gamma_1 \de$ to the decomposition of $f_0 \in \dom
  \laplX[0]B$ and obtain
  \begin{multline*}
    B \gamma_0 f_0 
    = \gamma_1 \de f_0 
    = \gamma_1 \de f_0^z  + \gamma_1 \de \beta_0^z \gamma_0 f_0\\
    = (\beta_0^{\conj z})^* (\lapl[0]-z) f_0^z  + Q_0^z \gamma_0 f_0
    = (\beta_0^{\conj z})^* h_0 + Q_0^z \gamma_0 f_0.
  \end{multline*}
  using the definition of $Q_0^z$ (cf.~\Def{dn.z}) and \Lem{beta.adj} for the
  third equality. In particular,
  \begin{equation}
    \label{eq:krein.calc2}
    (Q_0^z - B) \gamma_0 f_0 = (\beta_0^{\conj z})^* h_0,
  \end{equation}
  and the RHS covers the entire space $\HSaux^{1/2}$ since $h_0$
  covers $\HS_0$ (see again \Lem{beta.adj}). In particular, $(Q_0^z -
  B)$ is surjective.  By~\eqref{kernel}, this operator is also
  injective, i.e., $0 \notin \spec{Q_0^z - B}$.  Krein's formula now
  follows from~\eqref{eq:krein.calc}--\eqref{eq:krein.calc2}. The last
  assertion is a consequence of~\eqref{krein}.
\end{proof}

Returning to the original boundary space $\mc G$ and the
Dirichlet-to-Neumann map $\Lambda(z)=\Lambda Q_0^z$ --- regarded as an
unbounded operator in $\HSaux$ ---, we obtain the following result:
\begin{theorem}
  \label{thm:krein.dn}
  Let $\wt B$ be a self-adjoint and bounded operator in $\HSaux$ and
  $\laplX[0] {\wt B}$ the corresponding self-adjoint restriction of the
  Laplacian with domain
  \begin{equation*}
    \dom \laplX[0]B 
    := \set {f_0 \in \HS_0^2}
            {\wt \gamma_1 \de f_0 = \wt B \gamma_0 f_0}
  \end{equation*}
  (Robin type boundary conditions). Assume that $\dom (\laplX[0]B)^*
  \subset \HS_0^1$
  \begin{enumerate}
  \item For $z \notin \spec{\laplD[0]}$ we have $\ker (\laplX[0] B -
    z) = \beta_0^z \Lambda^{-1} \ker (\Lambda(z) - B)$.
  \item
    For $z \notin \spec{\laplX[0] B} \cup \spec{\laplD[0]}$ we have $0
    \notin \spec{\Lambda(z) - \wt B}$ and Krein's formula
    \begin{equation*}
        (\laplD[0] - z)^{-1} - (\laplX[0] B - z)^{-1}
        = \beta_0^z (\Lambda(z)-\wt B)^{-1} (\wt \beta_0^{\conj z})^*
    \end{equation*}
    is valid, where $(\wt \beta_0^{\conj z})^*$ is the adjoint of
    $\map{\beta_0^{\conj z}}{\HSaux^{1/2}}{\HS_0}$ considered as an
    unbounded operator $\partmap {\wt \beta_0^{\conj z}}\HSaux
    {\HS_0}$ with domain $\HSaux^{1/2}$.
  \item We have
    \begin{equation*}
          \spec {\laplX[0] B} \setminus \spec {\laplD[0]}
          = \set{ z \notin \spec {\laplD[0]}} 
                { 0 \in \spec{\Lambda(z) - \wt B}}.
    \end{equation*}
  \end{enumerate}
\end{theorem}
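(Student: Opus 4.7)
The plan is to reduce \Thm{krein.dn} to \Thm{krein} via the substitution $B := \Lambda^{-1} \wt B$. By \Lem{b.12}, this $B$ is bounded and self-adjoint on $\HSaux^{1/2}$. Using $\wt \gamma_1 = \Lambda \gamma_1$ and the invertibility of $\Lambda$ (\Lem{bd.map}), the Robin boundary condition $\wt \gamma_1 \de f_0 = \wt B \gamma_0 f_0$ is equivalent to $\gamma_1 \de f_0 = \Lambda^{-1} \wt B \gamma_0 f_0 = B \gamma_0 f_0$, so the operator $\laplX[0]{\wt B}$ defined here coincides with $\laplX[0]B$ in the sense of \Def{lapl.sa}. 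In particular, self-adjointness under the assumption $\dom(\laplX[0]B)^* \subset \HS_0^1$ then follows directly from \Lem{lapl.sa}.

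Next I translate the Krein-theoretic objects. From \Def{dn.z} we have $\Lambda(z) = \Lambda Q_0^z$, so
\[
Q_0^z - B = \Lambda^{-1}(\Lambda(z) - \wt B),
\]
and since $\Lambda^{-1}$ is a bounded bijection (\Lem{bd.map}), this intertwining gives $(Q_0^z - B)^{-1} = (\Lambda(z) - \wt B)^{-1} \Lambda$ on the appropriate spaces, and identifies kernels via $\Lambda^{-1}$. Parts (i) and (iii) of \Thm{krein.dn} now follow from the corresponding statements of \Thm{krein}. For part (ii), substituting into Krein's formula from \Thm{krein} yields
\[
\beta_0^z (Q_0^z - B)^{-1} (\beta_0^{\conj z})^*
= \beta_0^z (\Lambda(z) - \wt B)^{-1} \Lambda (\beta_0^{\conj z})^*,
\]
and the remaining task is the identification $\Lambda (\beta_0^{\conj z})^* = (\wt\beta_0^{\conj z})^*$, where on the LHS the adjoint is taken in the $\HSaux^{1/2}$ sense and on the RHS in the $\HSaux$ sense. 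This is a direct computation from the defining relation $\iprod[\HSaux^{1/2}]\phi\psi = \iprod[\HSaux]\phi{\Lambda\psi}$ (proved in \Lem{bd.map} for $\psi \in \dom \Lambda$, and extended by density to $\HSaux^{1/2}$).

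The main obstacle will be the careful bookkeeping on the scale of boundary spaces $\HSaux^s$, in particular tracking whether $\Lambda$ is viewed as a bounded map between scale spaces or as an unbounded operator on $\HSaux$, and identifying the domain of $(\wt\beta_0^{\conj z})^*$ as a (possibly extended) subset of $\HSaux$ so that the formula makes sense. Once these points are settled, each of the three parts follows essentially mechanically from \Thm{krein} together with the invertibility of $\Lambda$ on the boundary.
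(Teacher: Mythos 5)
Your proposal is correct and follows exactly the paper's own route: reduce to \Thm{krein} with $B=\Lambda^{-1}\wt B$, using the identities $\Lambda(z)-\wt B=\Lambda(Q_0^z-B)$ and $(\wt\beta_0^{\conj z})^*=\Lambda(\beta_0^{\conj z})^*$. The paper states this reduction in one line; your write-up merely supplies the (correct) bookkeeping that the paper leaves implicit.
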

\begin{proof}
  The proof follows from \Thm{krein} because $\Lambda(z) - \wt
  B=\Lambda(Q_0^z - B)$ and $(\wt \beta_0^z)^*= \Lambda(\beta_0^z)^*$.
\end{proof}

% ----------------------------------------------------------------------
%
\section{Boundary triples}
\label{sec:bd.triple}
%
%----------------------------------------------------------------------
In this section we show how the first order approach of the last
section fits into the setting of boundary triples in the usual sense.
We only sketch the ideas here; for more details on boundary triples,
we refer to~\cite{bgp:pre06,dhms:06} and the references therein.

\begin{definition}
  \label{def:bd.triple}
  Let $\HS$ be a Hilbert space with a closed operator $D$ in $\HS$.
  Assume furthermore that $\wt \HSaux$ is another Hilbert space, and
  $\map{\Gamma_0,\Gamma_1} {\dom D} {\wt \HSaux}$ are two linear maps.  We
  say that $(\wt \HSaux, \Gamma_0, \Gamma_1)$ is an \emph{(ordinary)
    boundary triple for $D$} iff
  \begin{subequations}
    \label{eq:bd.triple}
    \begin{gather}
    \label{eq:bd.triple1}
      \iprod[\HS] {Df} g - \iprod[\HS] f {Dg} 
      =  \iprod[\wt \HSaux] {\Gamma_0 f} {\Gamma_1 g} 
        -\iprod[\wt \HSaux] {\Gamma_1 f} {\Gamma_0 g}, \qquad
        \forall \, f,g \in \dom D\\
    \label{eq:bd.triple2}
      \map{\Gamma_0 \oplussplit \Gamma_1} {\dom D} 
             {\wt \HSaux \oplus \wt \HSaux}, \,
             f \mapsto \Gamma_0 f \oplus \Gamma_1 f
                       \quad \text{is surjective}\\
    \label{eq:bd.triple3}
      \ker (\Gamma_0 \oplussplit \Gamma_1) =
      \ker \Gamma_0 \cap \ker \Gamma_1 \quad \text{is dense in $\HS$.}
    \end{gather}
  \end{subequations}
\end{definition}

\begin{lemma}
  Let $\HS := \HS_0 \oplus \HS_1$ and $(\HS, \HSaux, \gamma_0)$ be a
  first order boundary triple as in \Def{ext.der}. Write
  \begin{equation*}
    D :=
    \begin{pmatrix}
      0 & \ded \\ \de & 0
    \end{pmatrix}, \quad
    \dom D := \HS^1 := \HS_0^1 \oplus \HS_1^1, \quad
    \normsqr[\HS^1] f = \normsqr[\HS] f + \normsqr[\HS]{Df},
  \end{equation*}
  and $\Gamma_p f := \gamma_p f_p$ for $f = f_0 \oplus f_1 \in \HS^1$.
  Then $(\HSaux^{1/2},\Gamma_0,\Gamma_1)$ is an ordinary boundary
  triple for $D$. 
\end{lemma}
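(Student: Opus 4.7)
The plan is to verify the three defining properties of an ordinary boundary triple (Green identity, surjectivity, and density of the joint kernel) one by one. First I would briefly note that $D$ is closed on $\HS^1$: this is because its off-diagonal entries $\de$ and $\ded = \de_0^*$ are closed operators between $\HS_0$ and $\HS_1$, so the block operator is closed on the product $\HS_0^1 \oplus \HS_1^1$, and a straightforward check shows $\normsqr[\HS^1] f = \normsqr {f_0} + \normsqr {f_1} + \normsqr {\de f_0} + \normsqr {\ded f_1}$ is indeed the graph norm.

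For the Green identity~\eqref{eq:bd.triple1}, I would expand both sides. Writing $f = f_0 \oplus f_1$, $g = g_0 \oplus g_1$, the anti-diagonal form of $D$ yields
\begin{equation*}
  \iprod[\HS]{Df}{g} - \iprod[\HS]{f}{Dg}
  = \bigl(\iprod{\de f_0}{g_1} - \iprod{f_0}{\ded g_1}\bigr)
  + \bigl(\iprod{\ded f_1}{g_0} - \iprod{f_1}{\de g_0}\bigr).
\end{equation*}
The first bracket equals $\iprod[\HSaux^{1/2}]{\gamma_0 f_0}{\gamma_1 g_1}$ by \Lem{green}. For the second bracket I would apply \Lem{green} with the roles of $f$ and $g$ swapped and then take complex conjugates (using sesquilinearity) to get $-\iprod[\HSaux^{1/2}]{\gamma_1 f_1}{\gamma_0 g_0}$. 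Substituting $\Gamma_0 f = \gamma_0 f_0$ and $\Gamma_1 f = \gamma_1 f_1$ produces exactly~\eqref{eq:bd.triple1}.

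The surjectivity condition~\eqref{eq:bd.triple2} follows because the two components can be chosen independently: given $\phi_0 \oplus \phi_1 \in \HSaux^{1/2} \oplus \HSaux^{1/2}$, we have $\ran \gamma_0 = \HSaux^{1/2}$ by definition of $\HSaux^{1/2}$, and $\ran \gamma_1 = \HSaux^{1/2}$ by \Lem{g1.ker}, so we pick preimages $f_0 \in \HS_0^1$ and $f_1 \in \HS_1^1$ separately (for instance via $\beta_0^{-1}$ and $\beta_1^{-1}$) and set $f := f_0 \oplus f_1 \in \HS^1 = \dom D$. For the density condition~\eqref{eq:bd.triple3}, the block structure of $\Gamma_0 \oplussplit \Gamma_1$ gives
\begin{equation*}
  \ker \Gamma_0 \cap \ker \Gamma_1 = \HSn_0^1 \oplus \HSn_1^1,
\end{equation*}
and this is dense in $\HS = \HS_0 \oplus \HS_1$ because $\HSn_0^1$ is dense in $\HS_0$ by \Defenum{ext.der}{bd} and $\HSn_1^1$ is dense in $\HS_1$ (noted in \Def{div}, since $\de$ is densely defined).

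Nothing here is really hard; the main bookkeeping obstacle is the sign in the second bracket of Green's identity, which requires a careful conjugation argument, together with keeping track of the fact that the target space of the boundary maps is $\HSaux^{1/2}$ (with its own inner product) and not $\HSaux$, so the boundary triple is built over $\wt \HSaux = \HSaux^{1/2}$ rather than over $\HSaux$ itself.
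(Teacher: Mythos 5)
Your proof is correct and follows essentially the same route as the paper: Green's identity is obtained by splitting $\iprod[\HS]{Df}{g}-\iprod[\HS]{f}{Dg}$ into the two brackets and applying \Lem{green} (once directly, once with arguments swapped and conjugated to get the sign), surjectivity comes from $\Gamma_0\oplussplit\Gamma_1=\gamma_0\oplus\gamma_1$ with each $\gamma_p$ surjective onto $\HSaux^{1/2}$, and density of $\HSn_0^1\oplus\HSn_1^1$ follows from \Defenum{ext.der}{bd}. The only addition is your explicit check that $D$ is closed, which the paper leaves implicit.
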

\begin{proof}
  The Green's formula~\eqref{eq:bd.triple1} follows from
  \begin{multline*}
      \iprod[\HS] {Df} g - \iprod[\HS] f {Dg} 
      =  \iprod[\HS_1] {\de f_0} {g_1} 
       - \iprod[\HS_0] {f_0} {\ded g_1}      
       + \iprod[\HS_0] {\ded f_1} {g_0} 
        -\iprod[\HS_1] {f_1} {\de g_0} \\
      =  \iprod[\HSaux^{1/2}] {\gamma_0 f_0} {\gamma_1 g_1} 
        -\iprod[\HSaux^{1/2}] {\gamma_1 f_1} {\gamma_0 g_0}
  \end{multline*}
  by \Lem{green}. The second condition~\eqref{eq:bd.triple2} follows
  from $\Gamma_0 \oplussplit \Gamma_1=\gamma_0 \oplus \gamma_1$ and
  the surjectivity of $\map{\gamma_p}{\HS_p^1}{\HSaux^{1/2}}$.  The
  last condition~\eqref{eq:bd.triple3}, i.e., the density of $\HSn^1
  := \HSn_0^1 \oplus \HSn_1^1$ in $\HS$, is a consequence of
  \Defenum{ext.der}{bd}.
\end{proof}

The next lemma can be proved readily:
\begin{lemma}
  \label{lem:iso.psi}
  Set $\mc N^w := \ker (D - w)$. If $w \ne 0$ then $\map {\psi_p^w}
  {\mc N_p^{w^2}}{\mc N^w}$ with
  \begin{equation*}
    \psi_0^w f_0 := \frac 1 {\sqrt 2}
    \begin{pmatrix}
      f_0 \\ \frac 1w \de f_0
    \end{pmatrix},
        \qquad
    \psi_1^w f_1 := \frac 1 {\sqrt 2}
    \begin{pmatrix}
      \frac 1w \ded f_1 \\ f_1
    \end{pmatrix}
  \end{equation*}
  are topological isomorphisms. In particular, for $w=\pm \im$, they
  are unitary.
\end{lemma}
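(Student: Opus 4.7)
The plan is to verify the three claims in turn: (i) $\psi_p^w$ lands in $\mc N^w$; (ii) it has a bounded inverse so is a topological isomorphism; (iii) for $w=\pm\im$ it is an isometry, hence unitary. All three fall out of the identity $D^2 = \lapl[0]\oplus \lapl[1]$ combined with the relations from \Lem{d.uni.z}.

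First I would check well-definedness and the eigenvalue equation. For $f_0 \in \mc N_0^{w^2}$ the component $\tfrac{1}{w}\de f_0$ lies in $\mc N_1^{w^2}\subset \HS_1^1$ by \Lem{d.uni.z}, so $\psi_0^w f_0 \in \HS_0^1 \oplus \HS_1^1 = \dom D$. A direct block computation using $\ded \de f_0 = w^2 f_0$ gives $D\psi_0^w f_0 = \tfrac{1}{\sqrt 2}(w f_0, \de f_0) = w\psi_0^w f_0$, so $\psi_0^w f_0 \in \mc N^w$; the argument for $\psi_1^w$ is symmetric.

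Next I would construct the inverse. Given $(g_0,g_1)\in \mc N^w$, the eigenvalue equations read $\de g_0 = w g_1$ and $\ded g_1 = w g_0$; combining them yields $\ded \de g_0 = w^2 g_0$ and $\de \ded g_1 = w^2 g_1$, so $g_0 \in \mc N_0^{w^2}$, $g_1 \in \mc N_1^{w^2}$, and moreover $g_1 = \tfrac{1}{w}\de g_0$ and $g_0 = \tfrac{1}{w}\ded g_1$. Thus the map $(g_0,g_1)\mapsto \sqrt 2\, g_0$ (resp.\ $\sqrt 2\, g_1$) is a two-sided inverse of $\psi_0^w$ (resp.\ $\psi_1^w$). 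Both inverses are bounded, being (scalar multiples of) the component projections $\HS^1 \to \HS_p^1$, so $\psi_p^w$ is a topological isomorphism.

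For the unitarity at $w=\pm\im$, I would compute the graph norm of $\psi_0^w f_0$ in $\HS^1$, using $D\psi_0^w f_0 = w\psi_0^w f_0$:
\begin{equation*}
  \normsqr[\HS^1]{\psi_0^w f_0}
   = (1+|w|^2)\normsqr[\HS]{\psi_0^w f_0}
   = \tfrac{1+|w|^2}{2}\Bigl(\normsqr[\HS_0]{f_0} + \tfrac{1}{|w|^2}\normsqr[\HS_1]{\de f_0}\Bigr).
\end{equation*}
For $|w|=1$ this collapses to $\normsqr[\HS_0]{f_0}+\normsqr[\HS_1]{\de f_0}=\normsqr[\HS_0^1]{f_0}$, so $\psi_0^w$ is isometric; combined with surjectivity onto $\mc N^w$ it is unitary. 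The analogous identity for $\psi_1^w$ is immediate by symmetry. I do not anticipate any genuine obstacle here — the only point requiring care is noting that $\mc N_p^{w^2}$ carries the $\HS_p^1$ norm (as a closed subspace, by \Lem{osum.z}) and $\mc N^w$ carries the $\HS^1$ graph norm.
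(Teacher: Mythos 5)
Your proof is correct; the paper itself omits the argument entirely (``can be proved readily''), and your direct verification --- checking the eigenvalue equation by a block computation, exhibiting the component projection as a bounded two-sided inverse via the relations $\de g_0 = w g_1$, $\ded g_1 = w g_0$, and computing the graph norm using $\norm[\HS^1]{g}^2=(1+|w|^2)\norm[\HS]{g}^2$ on $\mc N^w$ --- is exactly the routine argument the author has in mind. The only point worth making explicit is that your final norm identity also establishes boundedness of $\psi_p^w$ itself for general $w\ne 0$ (not just $|w|=1$), which is needed for ``topological isomorphism''; as written you only state boundedness of the inverse, but the identity you derive supplies the missing half.
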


\begin{corollary}
  \label{cor:def.ind} The operator $D$ has zero defect index, i.e., $\mc
  N^{\im}=\ker (D-\im)$ and $\mc N^{-\im}=\ker(D+\im)$ are isomorphic.
\end{corollary}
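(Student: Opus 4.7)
The plan is to invoke \Lem{iso.psi} directly at the defect values $w = \pm\im$. Since $w^2 = -1$ in either case, the convention $\mc N_p := \mc N_p^{-1}$ fixed in \Def{lapl} identifies the spaces $\mc N_p^{w^2}$ with $\mc N_p$, and the lemma then provides \emph{unitary} maps
\begin{equation*}
  \map{\psi_p^{+\im}}{\mc N_p}{\mc N^{+\im}} \quad \text{and} \quad \map{\psi_p^{-\im}}{\mc N_p}{\mc N^{-\im}}
\end{equation*}
for each $p \in \{0,1\}$. Fixing, say, $p = 0$, the composition $U := \psi_0^{-\im} \circ (\psi_0^{+\im})^{-1}$ is then a unitary (in particular, topological) isomorphism from $\mc N^{+\im}$ onto $\mc N^{-\im}$, which is exactly the claim.

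I do not expect any substantive obstacle; the argument is essentially a one-line consequence of \Lem{iso.psi}. The only conceptual point worth flagging is the reading of the phrase ``zero defect index'' in the statement: it should be interpreted as saying that the defect numbers $n_\pm := \dim \mc N^{\pm\im}$ coincide (their difference vanishes), not that they are individually zero. This equality is the standard condition guaranteeing that the underlying symmetric restriction of $D$ (namely $D_{\min}$, the closure on $\HSn^1 := \HSn_0^1 \oplus \HSn_1^1$) admits self-adjoint extensions, which is exactly what is needed for the ordinary boundary triple constructed earlier in the section to function as intended.
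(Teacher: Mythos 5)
Your proof is correct and is exactly the argument the paper intends: the corollary is stated without a separate proof precisely because it is the immediate consequence of \Lem{iso.psi} at $w=\pm\im$ that you spell out, composing $\psi_0^{-\im}\circ(\psi_0^{+\im})^{-1}$ to obtain a unitary from $\mc N^{\im}$ onto $\mc N^{-\im}$. Your reading of ``zero defect index'' as equality of the deficiency numbers $n_\pm$ is also the intended one.
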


The next lemma is a well known fact; we give a proof for completeness.
\begin{lemma}
  \label{lem:osum3}
  If $w \ne 0$ then $\HS^1 = \HSn^1 \dplus \mc N^{w} \dplus \mc
  N^{-w}$ (topological direct sum), and the projection $P^w$ onto $\mc
  N^w$ is given by
  \begin{equation*}
    P^w = \frac 12
    \begin{pmatrix}
      P_0^{w^2} & \frac 1w \ded P_1^{w^2}\\
      \frac 1w \de P_0^{w^2} & P_1^{w^2}
    \end{pmatrix}.
  \end{equation*}
  If $w=\pm \im$, then we have $\HS^1 = \HSn^1 \oplus \mc N^{\im}
  \oplus \mc N^{-\im}$ (orthogonal direct sum), and $P^{\pm \im}$ are
  \emph{orthogonal} projections (in $\HS^1$).
\end{lemma}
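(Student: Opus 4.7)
My plan is to build the decomposition in two stages. First, \Lem{osum.z} applied componentwise already yields the topological direct sum $\HS_p^1 = \HSn_p^1 \dplus \mc N_p^{w^2}$, and hence $\HS^1 = \HSn^1 \dplus (\mc N_0^{w^2} \oplus \mc N_1^{w^2})$ with bounded projections $P_p^{w^2}$. The remaining and more interesting step is to split the ``harmonic'' summand $\mc N_0^{w^2} \oplus \mc N_1^{w^2}$ further as $\mc N^w \dplus \mc N^{-w}$ and to identify the resulting projection onto $\mc N^w$ with the matrix formula stated in the lemma.

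For the internal splitting, note first that $\mc N^w \cap \mc N^{-w} = \{0\}$ by the usual eigenvalue argument, since $Df = wf = -wf$ forces $f = 0$ when $w \neq 0$. Given $g_0 \oplus g_1 \in \mc N_0^{w^2} \oplus \mc N_1^{w^2}$, I would seek $a_0, b_0 \in \mc N_0^{w^2}$ so that $g_0 \oplus g_1$ equals the sum of the corresponding $\mc N^{\pm w}$-elements obtained from $\psi_0^{\pm w}$ of \Lem{iso.psi}. Matching the $\HS_0$-component gives $a_0 + b_0 = g_0$; matching the $\HS_1$-component together with the identity $\de \ded = w^2$ on $\mc N_1^{w^2}$ (\Lem{d.uni.z}) gives $a_0 - b_0 = \tfrac 1 w \ded g_1$. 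These are uniquely solvable within $\mc N_0^{w^2}$, since $\ded$ sends $\mc N_1^{w^2}$ into $\mc N_0^{w^2}$. Substituting $g_p = P_p^{w^2} f_p$ and reading off the $\mc N^w$-component reproduces precisely the matrix formula for $P^w$, after simplification via the eigenvalue identities $\ded \de = w^2$ on $\mc N_0^{w^2}$ and $\de \ded = w^2$ on $\mc N_1^{w^2}$.

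Topological-ness of the three-term sum is automatic because every building block — the projections $P_p^{w^2}$ on $\HS_p^1$, and the restrictions of $\de$ and $\ded$ to the harmonic subspaces — is bounded by \LemS{osum.z}{d.uni.z}. For the orthogonality statement when $w = \pm \im$, I would exploit the identity $\iprod[\HS^1]{f}{g} = \iprod[\HS]{f}{g} + \iprod[\HS]{Df}{Dg}$: for $f \in \mc N^{\im}$ and $g \in \mc N^{-\im}$ this collapses to $(1 + \overline{\im}(-\im)) \iprod[\HS]{f}{g} = 0$, proving $\mc N^{\im} \orth \mc N^{-\im}$ inside $\HS^1$. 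The orthogonality $\HSn^1 \orth \mc N^{\pm \im}$ reduces to the componentwise statement of \Lem{osum}, using $\mc N_p^{-1} = \mc N_p$ and $\HSn_p^1 = \mc N_p^\orth$ in $\HS_p^1$.

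The main obstacle is not conceptual but computational: verifying that the projection produced by the two-stage decomposition matches the explicit matrix expression requires careful bookkeeping with $\ded \de = w^2$ on $\mc N_0^{w^2}$ and $\de \ded = w^2$ on $\mc N_1^{w^2}$, and the factors of $\tfrac 1 2$ in the formula trace back to solving the pair $a_0 \pm b_0$ by summation and subtraction.
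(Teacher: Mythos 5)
Your argument is correct and follows essentially the same route as the paper's: both reduce the statement to the componentwise splitting $\HS_p^1 = \HSn_p^1 \dplus \mc N_p^{w^2}$ of \Lem{osum.z} together with the parametrisation of $\mc N^{\pm w}$ by $\mc N_0^{w^2}$ from \Lem{iso.psi}, and both obtain directness from the eigenvalue argument and boundedness of the constituent maps. The only difference is cosmetic: the paper writes down the matrix $P^w$ and verifies $DP^w = wP^w$, $(P^w)^2 = P^w$ and $P^w + P^{-w} = P_0^{w^2}\oplus P_1^{w^2}$, whereas you derive the same formula by solving the $2\times2$ system $a_0+b_0=g_0$, $a_0-b_0=\tfrac1w\ded g_1$ via \Lem{d.uni.z}.
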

\begin{proof}
  Recall that $P_p^z$ is the projection onto $\mc
  N_p^z=\ker(\lapl[p]-z)$.  Denote by $\ring P_p := 1 - P_p^z$ the
  projection onto $\HSn_p^1$ and set $\ring P := \ring P_0 \oplus
  \ring P_1$. Then we can decompose $f \in \HS^1$ as
\begin{equation*}
    f = \ring P f + P^w f + P^{-w} f,
  \end{equation*}
  since $P^w + P^{-w} = P_0^{w^2} \oplus P_1^{w^2}$ and $\ring P +
  (P_0^{w^2} \oplus P_1^{w^2})=1$. A simple calculation shows that $D
  P^w = w P^w$, i.e., that $P^w f \in \mc N^w$; in addition,
  $(P^w)^2=P^w$, i.e., $P^w$ is a projection; and $\ring P f \in
  \HSn^1$. 

  \sloppy The sum of eigenspaces associated to different eigenvalues
  is direct, and $\mc N^w \dplus \mc N^{-w} = \mc N_0^{w^2} \oplus \mc
  N_1^{w^2}$ (\Lem{iso.psi}). Since in addition, $\HS_p^1 = \HSn_p^1
  \dplus \mc N_p^{w^2}$, it follows that the sum $\HS^1=\HSn^1 \dplus
  \mc N^w \dplus \mc N^{-w}$ is direct. The direct sum is also
  topological since the projections are bounded operators. The
  orthogonality for $w=\pm \im$ can be checked easily.
\end{proof}

\begin{lemma}
  Let $D^{\min}$ be the restriction of $D$ onto $\dom D^{\min} =
  \HSn^1 := \HSn_0^1 \oplus \HSn_1^1 = \ker(\Gamma_0 \oplussplit
  \Gamma_1)$. Then $(D^{\min})^*=D$.
\end{lemma}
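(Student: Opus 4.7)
The plan is to compute $(D^{\min})^*$ directly from the definition of the Hilbert space adjoint and exploit the block off-diagonal structure of $D^{\min}$ to decouple the two components.

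First I would unpack the operator. Since $\HSn_1^1 = \dom \de^* \subset \HS_1^1 = \dom \ded$, and $\ded = \de_0^*$ restricts to $\de^*$ on $\dom \de^*$, we may write
\begin{equation*}
  D^{\min} =
  \begin{pmatrix} 0 & \de^* \\ \de_0 & 0 \end{pmatrix}
  \quad\text{on}\quad
  \dom D^{\min} = \HSn_0^1 \oplus \HSn_1^1.
\end{equation*}
Both $\de_0$ and $\de^*$ are closed densely defined (the density of $\HSn_0^1$ in $\HS_0$ is \Defenum{ext.der}{bd}; that of $\HSn_1^1$ in $\HS_1$ follows since $\de$ is densely defined, cf.~\Def{div}), so $D^{\min}$ itself is closed and densely defined in $\HS = \HS_0 \oplus \HS_1$, hence its adjoint is well defined.

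Next I would characterise the graph of $(D^{\min})^*$. A pair $f = f_0 \oplus f_1$, $h = h_0 \oplus h_1 \in \HS$ lies in $\graph (D^{\min})^*$ iff
\begin{equation*}
  \iprod[\HS_0]{\de^* g_1}{f_0} + \iprod[\HS_1]{\de_0 g_0}{f_1}
  = \iprod[\HS_0]{g_0}{h_0} + \iprod[\HS_1]{g_1}{h_1}
\end{equation*}
for all $g_0 \in \HSn_0^1$ and all $g_1 \in \HSn_1^1$. Because $g_0$ and $g_1$ vary independently, I would set $g_1 = 0$ to obtain $\iprod{\de_0 g_0}{f_1} = \iprod{g_0}{h_0}$ for every $g_0 \in \HSn_0^1$; this is precisely the defining condition for $f_1 \in \dom \de_0^* = \HS_1^1$ with $\ded f_1 = h_0$. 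Setting $g_0 = 0$ gives $\iprod{\de^* g_1}{f_0} = \iprod{g_1}{h_1}$ for every $g_1 \in \dom \de^*$; since $\de$ is closed, $(\de^*)^* = \de$, so this forces $f_0 \in \HS_0^1$ with $\de f_0 = h_1$.

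Conjoining these two conditions identifies $\graph (D^{\min})^*$ with the set of pairs $(f_0 \oplus f_1, \ded f_1 \oplus \de f_0)$ where $f_p \in \HS_p^1$, i.e.\ with $\graph D$. I expect the only mild subtlety will be the book-keeping between $\de^*$ and $\ded = \de_0^*$: both act the same way on $\HSn_1^1 \subset \HS_1^1$, so $D^{\min}$ is well defined and the decoupling argument is legitimate; the containment $\HSn_1^1 \subset \HS_1^1$ is what produces the \emph{larger} domain $\HS_1^1 = \dom \ded$ on the adjoint side. No density or maximality arguments beyond the definitions of $\de_0^*$ and $(\de^*)^*$ are needed, so the proof is essentially a direct computation.
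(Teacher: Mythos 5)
Your proposal is correct, but it takes a genuinely different route from the paper. The paper disposes of this lemma in one line by appealing to the general theory of ordinary boundary triples, citing \cite[Thm.~1.13~(1)$\Rightarrow$(4)]{bgp:pre06} (the statement that for a boundary triple the adjoint of the minimal operator $D\restr{\ker\Gamma_0\cap\ker\Gamma_1}$ is the maximal one), together with the remark that $D$ has equal (zero) defect indices by \Cor{def.ind}. You instead compute $(D^{\min})^*$ by hand: you write $D^{\min}=\bigl(\begin{smallmatrix}0&\de^*\\ \de_0&0\end{smallmatrix}\bigr)$ on $\dom\de_0\oplus\dom\de^*$ (legitimate, since $\de_0\subset\de$ gives $\de^*\subset\de_0^*=\ded$), and because $\dom D^{\min}$ is a direct sum you may test against $(g_0,0)$ and $(0,g_1)$ separately; the two resulting conditions are exactly the defining conditions of $\dom\de_0^*=\HS_1^1$ and of $\dom(\de^*)^*$, and closedness plus density of $\dom\de$ give $(\de^*)^*=\de$, so $\graph(D^{\min})^*=\graph D$. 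This argument is self-contained: it uses only the definitions of $\de_0$, $\de^*$, $\ded$ and the density of $\ker\gamma_0$ in $\HS_0$ and of $\dom\de^*$ in $\HS_1$, and in particular needs neither Green's formula, nor the surjectivity of $\Gamma_0\oplussplit\Gamma_1$, nor the defect-index observation. What the paper's citation buys is brevity and placement of the lemma inside the standard boundary-triple machinery; what your computation buys is an elementary proof independent of that machinery, which even shows that the conclusion holds for any densely defined closed $\de$ and any restriction $\de_0$ of $\de$ with dense domain, irrespective of the boundary-map structure. I see no gap.
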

\begin{proof}
  We refer to \cite[Thm.~1.13~(1)$\Rightarrow$(4)]{bgp:pre06} for a
  proof. Note that $D$ has self-adjoint restrictions since the defect
  index is $0$ by \Cor{def.ind}.
\end{proof}

We write $D^\Dir := D \restr {\ker \Gamma_0}$, the \emph{Dirichlet
  Dirac operator}, and $D^\Neu := D \restr {\ker \Gamma_1}$, the
\emph{Neumann Dirichlet operator}. Note that $(D^\Dir)^2 = \laplD[0]
\oplus \laplD[1]$ and $(D^\Neu)^2 = \laplN[0] \oplus \laplN[1]$.
\begin{lemma}
  \label{lem:krein.g3}
  Let $w \notin \spec {D^\Dir}$. The operator
  $\map{\Gamma_0 \restr {\mc N^w}}{\mc N^w} {\HSaux^{1/2}}$ has a
  bounded inverse $\beta^w$, and $w \mapsto \beta^w$ is a
  \emph{$\Gamma$-Krein field}, i.e.,
  \begin{subequations}
    \begin{gather}
      \label{eq:g.krein1}
      \map {\beta^w}{\HSaux^{1/2}}{\mc N^w} \quad\text{is a
        topological isomorphism and}\\
      \label{eq:g.krein2}
      \beta^{w_1} = U^{w_1,w_2} \beta^{w_2}, \qquad w_1,w_2 \notin
      \spec {D^\Dir},
    \end{gather}
  \end{subequations}
  where
  \begin{equation*}
    U^{w_1,w_2} := (D^\Dir - w_2) (D^\Dir - w_1)^{-1} =
    1 + (w_1 - w_2) (D^\Dir - w_1)^{-1}.
  \end{equation*}
  Furthermore, $\beta^w = \sqrt 2 \psi_0^w \beta_0^{w^2}$, where
  $\beta_0^z$ is the Krein $\gamma$-function of order $0$ associated
  with the first order boundary triple $(\HS, \HSaux, \gamma_0)$
  (cf.~\Def{krein.g}) and $\psi_0^w$ is defined in \Lem{iso.psi}.
\end{lemma}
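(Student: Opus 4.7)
The plan is to prove the closing identity $\beta^w = \sqrt 2 \psi_0^w \beta_0^{w^2}$ first, since the two bulleted properties $(\ref{eq:g.krein1})$ and $(\ref{eq:g.krein2})$ then follow with little additional work. I would begin by noting that $(D^\Dir)^2 = \laplD[0] \oplus \laplD[1]$ together with spectral mapping forces $w^2 \notin \spec {\laplD[0]}$ whenever $w \notin \spec {D^\Dir}$, so the order-$0$ Krein $\Gamma$-field $\beta_0^{w^2}$ of \Def{krein.g} is available, and similarly $\mc N_0^{w^2}$ is well-defined.

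For the identification of $\beta^w$, I would read off the first block of $\psi_0^w$ from \Lem{iso.psi}: it equals $\frac 1 {\sqrt 2} f_0$, so $\Gamma_0 \psi_0^w f_0 = \frac 1 {\sqrt 2} \gamma_0 f_0$ for $f_0 \in \mc N_0^{w^2}$. Since $\gamma_0 \restr {\mc N_0^{w^2}} = \hat \gamma_0^{w^2}$ is a topological isomorphism onto $\HSaux^{1/2}$ by \Lem{osum.z}, and $\psi_0^w$ is a topological isomorphism $\mc N_0^{w^2} \to \mc N^w$ by \Lem{iso.psi}, the composition $\sqrt 2 \psi_0^w \beta_0^{w^2}$ is a topological isomorphism $\HSaux^{1/2} \to \mc N^w$ and is a two-sided inverse of $\Gamma_0 \restr {\mc N^w}$. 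This establishes~$(\ref{eq:g.krein1})$ and the final formula simultaneously, and supplies the definition of $\beta^w$.

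For the Krein-field relation~$(\ref{eq:g.krein2})$, I would argue algebraically on $f_2 := \beta^{w_2}\phi \in \mc N^{w_2}$. The operator identity $(D^\Dir - w_2)(D^\Dir - w_1)^{-1} = 1 + (w_1 - w_2)(D^\Dir - w_1)^{-1}$ is a one-line resolvent manipulation. Setting $g := U^{w_1,w_2} f_2 = f_2 + (w_1 - w_2)(D^\Dir - w_1)^{-1} f_2$, the correction term lies in $\dom D^\Dir \subset \ker \Gamma_0$, so $\Gamma_0 g = \Gamma_0 f_2 = \phi$. For the eigenvalue condition $(D - w_1) g = 0$ I would expand, use that on $\dom D^\Dir$ the operator $D$ agrees with $D^\Dir$, so $(D - w_1)(D^\Dir - w_1)^{-1} f_2 = f_2$, and then cancel with $Df_2 = w_2 f_2$. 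Together with $g \in \mc N^{w_1}$ and $\Gamma_0 g = \phi$, the already-established bijectivity in~$(\ref{eq:g.krein1})$ forces $g = \beta^{w_1}\phi$.

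I do not expect a serious obstacle: the only bookkeeping point is verifying that $g \in \dom D$ (automatic since $f_2 \in \HS^1 = \dom D$ and the resolvent of $D^\Dir$ lands in $\dom D^\Dir \subset \dom D$) before writing $(D - w_1) g$. All other steps reduce to \Lem{iso.psi}, \Lem{osum.z}, and \Def{krein.g}; the factorisation through $\psi_0^w$ is what makes the argument short, since otherwise one would have to re-establish an $\mc N^w$-analogue of \Lem{osum.z} directly for the Dirac operator~$D$.
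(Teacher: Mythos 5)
Your proposal is correct, and it is more self-contained than the paper's own argument. The paper disposes of assertions \eqref{eq:g.krein1} and \eqref{eq:g.krein2} by citing the general boundary-triple result \cite[Thm.~1.23~(2a--b)]{bgp:pre06} (legitimate, since the preceding lemma of \Sec{bd.triple} shows $(\HSaux^{1/2},\Gamma_0,\Gamma_1)$ is an ordinary boundary triple for $D$), and only the identification $\beta^w=\sqrt2\,\psi_0^w\beta_0^{w^2}$ is argued directly, via $\Gamma_0=\gamma_0\pi_0$ and the observation that $\pi_0$ inverts $\sqrt2\,\psi_0^w$ on the relevant subspaces --- which is exactly your ``first block'' computation read in the opposite direction. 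You instead reverse the logical order: you establish the factorisation first and let \eqref{eq:g.krein1} fall out of \Lem{iso.psi} and \Lem{osum.z}, and you prove \eqref{eq:g.krein2} by the direct resolvent manipulation $g=f_2+(w_1-w_2)(D^\Dir-w_1)^{-1}f_2$ together with uniqueness from \eqref{eq:g.krein1}; both steps are sound (the verification $(D-w_1)g=0$ and $\Gamma_0 g=\Gamma_0 f_2$ is exactly right). What your route buys is independence from the external reference; what the paper's route buys is brevity. Two small points you glossed over, which the paper also leaves implicit: the inference $w\notin\spec{D^\Dir}\Rightarrow w^2\notin\spec{\laplD[0]}$ needs not just spectral mapping for $(D^\Dir)^2$ but also the symmetry of $\spec{D^\Dir}$ about $0$ (supplied by the grading unitary $f_0\oplus f_1\mapsto f_0\oplus(-f_1)$, which anticommutes with $D$ and preserves $\ker\Gamma_0$); and the factorisation through $\psi_0^w$ tacitly assumes $w\ne0$, as does the statement of the lemma itself.
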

\begin{proof}
  For the proof of the first assertion, we refer again
  to~\cite[Thm.~1.23~(2a--b)]{bgp:pre06}. The relation with
  $\beta_0^{w^2}$ follows from the fact that $\Gamma_0 = \gamma_0
  \pi_0$, where $\map {\pi_0} {\HS^1}{\HS_0^1}$, $f \mapsto f_0$; and
  the inverse of $\sqrt 2 \psi_0^w$ is $\pi_0$ (restricted to the
  appropriate subspaces).
\end{proof}

\begin{lemma}
  \label{lem:krein.q3}
  The operator $\map{Q^w := \Gamma_1
    \beta^w}{\HSaux^{1/2}}{\HSaux^{1/2}}$ defines the Krein
  Q-function $w \mapsto Q^w$, i.e.,
  \begin{equation*}
    Q^{w_1} - (Q^{\conj w_2})^* = 
    (w_1 - w_2) (\beta^{\conj w_2})^* \beta^{w_1} \qquad 
       w_1,w_2 \notin \spec {D^\Dir}.
  \end{equation*}
  Furthermore, $Q^w=\frac 1 w Q_0^{w^2}$, where $Q_0^z$ is the Krein
  Q-function associated to the first order boundary triple $(\HS,
  \HSaux, \gamma_0)$ (cf.~\Def{dn.z}).
\end{lemma}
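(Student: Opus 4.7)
My plan is to verify the two assertions separately; both are direct computations once the right ingredients from the excerpt are assembled.

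For the Krein Q-function identity, the strategy is to apply the abstract Green's formula~\eqref{eq:bd.triple1} to the pair $f := \beta^{w_1}\phi_1 \in \mc N^{w_1}$ and $g := \beta^{\conj w_2}\phi_2 \in \mc N^{\conj w_2}$, where $\phi_1, \phi_2 \in \HSaux^{1/2}$ are arbitrary. On the left-hand side, I use $Df = w_1 f$ and $Dg = \conj w_2 g$ to obtain $(w_1 - w_2)\iprod{\beta^{w_1}\phi_1}{\beta^{\conj w_2}\phi_2}$, which I then rewrite as $(w_1 - w_2)\iprod{(\beta^{\conj w_2})^* \beta^{w_1}\phi_1}{\phi_2}$ using that $\beta^{\conj w_2}$ is bounded from $\HSaux^{1/2}$ into $\HS$ (cf.~\Remenum{beta}{adj}, applied via $\psi_0^w$ and the first order version of this fact). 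On the right-hand side, I invoke $\Gamma_0 \beta^w = \id_{\HSaux^{1/2}}$ (which is the defining property of $\beta^w$ from \Lem{krein.g3}) and $\Gamma_1 \beta^w = Q^w$ (the definition of $Q^w$); combined with the adjoint relation, this gives $\iprod{((Q^{\conj w_2})^* - Q^{w_1})\phi_1}{\phi_2}$. Comparing the two sides and using that $\phi_1, \phi_2$ are arbitrary in $\HSaux^{1/2}$ yields the claimed identity (up to the sign convention built into~\eqref{eq:bd.triple1}).

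For the relation $Q^w = \frac{1}{w} Q_0^{w^2}$, the plan is to simply substitute the explicit formula $\beta^w = \sqrt 2\, \psi_0^w \beta_0^{w^2}$ from \Lem{krein.g3} together with the expression for $\psi_0^w$ from \Lem{iso.psi}. Applying $\psi_0^w$ to $\beta_0^{w^2}\phi$ yields
\begin{equation*}
  \sqrt 2\, \psi_0^w \beta_0^{w^2}\phi
  = \begin{pmatrix} \beta_0^{w^2}\phi \\ \frac 1w \de \beta_0^{w^2}\phi \end{pmatrix}
  \in \HS_0^1 \oplus \HS_1^1.
\end{equation*}
By the definition of $\Gamma_1$ on $\HS^1 = \HS_0^1 \oplus \HS_1^1$ as $\Gamma_1 f = \gamma_1 f_1$, only the second component contributes, giving
\begin{equation*}
  Q^w \phi = \Gamma_1 \beta^w \phi = \gamma_1\!\left(\tfrac 1w \de \beta_0^{w^2}\phi\right) = \tfrac 1w \gamma_1 \de \beta_0^{w^2}\phi = \tfrac 1w Q_0^{w^2}\phi
\end{equation*}
by the definition of $Q_0^z$ in \Def{dn.z}.

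Neither step has a real obstacle; the main point to be careful about is the sign/conjugation bookkeeping in the first part, so that the resulting identity matches the stated form — everything else is a direct substitution of the relations already established in the excerpt.
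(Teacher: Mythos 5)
Your approach to both parts is correct and is in substance the standard boundary-triple argument; the paper itself gives essentially no proof here (it cites \cite[Thm.~1.23~(2c)]{bgp:pre06} for the identity and calls the relation $Q^w=\tfrac1w Q_0^{w^2}$ ``straightforward''), so your computation is precisely the content hiding behind that citation. The second part is exactly right: $\Gamma_1$ only sees the second component of $\sqrt2\,\psi_0^w\beta_0^{w^2}\phi$, which is $\tfrac1w\de\beta_0^{w^2}\phi$, and \Def{dn.z} finishes it.

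One point deserves more than the parenthetical ``up to the sign convention'': with Green's formula \emph{as literally written} in~\eqref{eq:bd.triple1}, your own computation lands on
\begin{equation*}
  (w_1-w_2)\,\iprod{(\beta^{\conj w_2})^*\beta^{w_1}\phi_1}{\phi_2}
  = \iprod{\bigl((Q^{\conj w_2})^*-Q^{w_1}\bigr)\phi_1}{\phi_2},
\end{equation*}
i.e.\ the \emph{negative} of the identity stated in the lemma. This is not an error in your method but a convention mismatch: the cited reference writes Green's identity with the left-hand side $\iprod{f}{Dg}-\iprod{Df}{g}$, the opposite sign of~\eqref{eq:bd.triple1}, and the lemma's formula is stated in that convention. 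So either track the sign through consistently from~\eqref{eq:bd.triple1} (and obtain the identity with $(w_2-w_1)$), or note explicitly that the stated form presupposes the reference's sign convention. Relatedly, be careful to fix once and for all whether $\iprod\cdot\cdot$ is linear in the first or the second slot; your step pulling $(\beta^{\conj w_2})^*$ onto $\phi_1$ and your extraction of the factor $(w_1-w_2)$ (rather than $(\conj{w_1}-\conj{w_2})$) both silently assume linearity in the first argument, whereas the Green's formula in the introduction of the paper suggests the opposite convention. Once these bookkeeping choices are pinned down, the proof is complete.
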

\begin{proof}
  For the proof of the first assertion, we refer again
  to~\cite[Thm.~1.23~(2c)]{bgp:pre06}. The other follows
  straightforward.
\end{proof}

Further results like Krein's resolvent formula or the spectral
relation for self-adjoint restrictions $D^B$ of $D$ can be found
e.g.~in~\cite{bgp:pre06}. In particular, if $B$ is bounded and
self-adjoint in $\HSaux^{1/2}$ then the restriction of $D$ to
\begin{equation*}
  \dom D^B 
    := \set {f \in \HS^1}
            {\Gamma_1 f = B \Gamma_0 f}
    = \set {f \in \HS^1}
            {\gamma_1 f_1 = B \gamma_0 f_0}
\end{equation*}
defines a self-adjoint operator $D^B$. The Laplacian $(D^B)^2$ acts on
each component as the Laplacian $\lapl[p] f_p$, but with domain
\begin{align*}
  \dom (D^B)^2 
    &= \set {f \in \dom D^B}
            {D f \in \dom D^B}\\
    &= \set {f \in \HS^2}
            {\gamma_1 f_1 = B \gamma_0 f_0, \quad
             \gamma_1 \de f_0 = B \gamma_0 \ded f_1}.
\end{align*}
Note that this domain is different from $\dom \laplX[0] B \oplus \dom
\laplX[1] B$ (cf.~\Def{lapl.sa}) since the two components in
$\dom(D^B)^2$ are coupled.

% ----------------------------------------------------------------------
%
\section{Manifolds with boundary}
\label{sec:example}
%
%----------------------------------------------------------------------
In this section we present our main example and show how it fits into
the abstract setting of first order boundary triples of
\Sec{1st.order} (see also~\cite{arlinskii:00}).

Let $X$ be a Riemannian manifold with boundary $\bd X$ equipped with
their natural volume measures. Denote the cotangential bundle (or
bundle of $1$-forms) by $T^*X$. The data we need to fix are the
following:
\begin{align*}
  \HS_0 &:= \Lsqr X,     & \HS_0^1&:= \Sob X,\\
  \HS_1 &:= \Lsqr{T^*X}, & \map \de {\Sob X&}{\Lsqr{T^*X}},
\end{align*}
where $\Lsqr X$ and $\Lsqr {T^*X}$ are the spaces of square-integrable
functions and sections over the cotangent ($1$-form) bundle, and where
$\de$ stands for the usual exterior derivative with domain $\HS_0^1 :=
\Sob X$, the Sobolev space of functions $f \in \Lsqr X$ such that
$|\de f| \in \Lsqr X$ (or $\de f \in\Lsqr {T^*X}$, what is the same).

For the boundary map, we need to fix the boundary space $\HSaux :=
\Lsqr {\bd X}$, and we define
\begin{align*}
  \map {\gamma_0}{\Sob X&}{\Lsqr {\bd X}}, &
  \gamma_0 f := f \restr {\bd X}.
\end{align*}
Note that the norm of $\gamma_0$ depends on the local geometry of $X$
near $\bd X$. The range of $\gamma_0$ is $\HSaux^{1/2}= \Sob[1/2]{\bd
  X}$ together with the intrinsic norm defined in \Sec{1st.order},
namely
\begin{equation*}
  \normsqr[{\Sob[1/2]{\bd X}}] \phi :=
  \normsqr[\Sob X] {f_0} = \normsqr[\Lsqr X] {f_0} 
              + \normsqr[\Lsqr X]{\de f_0},
\end{equation*}
where $f_0$ is the solution of the Dirichlet problem $(\lapl[0]+1)f_0=0$
and $\gamma_0 f_0 = \phi$.  Since $\Sob[1/2]{\bd X} \ne \Lsqr{\bd X}$,
the boundary map $\gamma_0$ is proper.

After defining these data, we obtain $\HSn_0^1 = \Sobn X = \ker
\gamma_0$ and $\de_0 := \de \restr {\Sobn X}$.  Furthermore, $\ded =
\de_0^*$ is the divergence operator.  Comparing the abstract Green's
formula in \Lem{green} with Green's formula
\begin{equation*}
  \int_X \iprod {\de f} \eta{}_x \dd x 
  - \int_X \conj f \, \ded \eta \dd x 
  = \int_{\bd X} (\conj f  \, \eta_{\mathrm n})\restr{\bd X},
\end{equation*}
where $\eta_{\mathrm n}$ stands for the normal component of the
$1$-form $\eta$ near $\bd X$, we see that
\begin{equation*}
  \wt \gamma_1 \eta = \eta_{\mathrm n} \restr{\bd X}
\end{equation*}
  
\begin{remark}
  \label{rem:not.sob}
  Note that $\HS_1^1 := \dom \ded \subset \Lsqr {T^*X}$ is \emph{not}
  the Sobolev space of order $1$ on $1$-forms, defined locally via
  charts. Therefore, $\map{\wt \gamma_1}{\dom \ded} {\Sob[-1/2]{\bd
      X}}$, and $\wt \gamma_1$ does not map into $\Sob[1/2]{\bd X}$,
  as one could naively guess.
\end{remark}
The Dirichlet-to-Neumann map in this case is
\begin{equation}
  \label{eq:dn}
  \Lambda(z) \phi = \normder h_0, \qquad\text{where}\qquad
  \lapl[0] h_0 = z h_0,
  \qquad h_0 \restr {\bd X} = \phi
\end{equation}
for $\phi \in \Sob[1/2]{\bd X}$ and $z \notin \spec {\laplD[0]}$
(cf.~\Def{dn.z}).

Self-adjoint boundary conditions of the Laplacian on $0$-forms like
\emph{Robin boundary conditions} are now given as follows: Let $\wt B$
be a bounded, real-valued function on $\bd X$ and set $B:=
\Lambda^{-1}\wt B$. Then $B$ is bounded and self-adjoint on
$\HSaux^{1/2}$ (\Lem{b.12}) and
\begin{equation*}
  \dom (\laplX[0]B)^* = 
    \set{f_0 \in  \laplX[0]{\max} }
     {\normder f_0 \restr{\bd X} = \wt B f_0 \restr{\bd X}}
\end{equation*}
is indeed a subset of the \emph{Sobolev} space $\Sob[2] X$ (see
e.g.~\cite[Prop.~III.5.2]{grubb:68}
or~\cite[Thm.~7.4]{lions-magenes:72}). In particular, the domain
condition $\dom (\laplX[0]B)^* \subset \HS_0^1=\Sob X$ is fulfilled,
and the above domain defines a \emph{self-adjoint} Laplace operator
(cf.~\Lem{lapl.sa}).

Note that in general, the Robin boundary conditions cannot be
expressed as $(D^B)^2$ where $D^B$ is a self-adjoint restriction of
the Dirac operator (cf.~the end of \Sec{bd.triple}). This is another
justification of our first order approach (instead of directly
starting from an ordinary boundary triple as in \Sec{bd.triple}).
  
\begin{remark}
  \label{rem:lapl.max.mfd}
  The first order approach to boundary triples enables us to use the
  \emph{natural} boundary maps $\gamma_0 f= f \restr {\bd X}$ and $\wt
  \gamma_1 \eta = \eta_{\mathrm n} \restr {\bd X}$, in contrast to the
  second order approach using the Laplacian as
  e.g.~in~\cite{bmnw:pre07,posilicano:pre07}. In the second order
  approach, the maximal domain of the Laplacian
  \begin{equation*}
    \dom \lapl^{\max} = \set{f \in \Lsqr X} 
    { \lapl f \in \Lsqr X}
  \end{equation*}
  is \emph{not} a subset of the Sobolev space $\Sob X$. In particular,
  $f \restr {\bd X}$ is not in $\Lsqr {\bd X}$, but only in
  $\Sob[-1/2]{\bd X}$; and $\normder f \restr{\bd X} \in
  \Sob[-3/2]{\bd X}$ (see e.g.~\cite{grubb:68, grubb:06,
    lions-magenes:72}). In particular, Green's formula
  (cf.~\eqref{eq:bd.triple1}) fails to hold with the natural boundary
  maps.
\end{remark}

%----------------------------------------------------------------------
%\bibliographystyle{amsalpha}
%\bibliography{/home/post/Aktuell/BibTeX/literatur}
%----------------------------------------------------------------------
\providecommand{\bysame}{\leavevmode\hbox to3em{\hrulefill}\thinspace}
\renewcommand{\MR}[1]{}

\end{document}